\tikzset{>=latex}
\theoremstyle{plain}
\newtheorem{theorem}{Theorem}[section]
\newtheorem{lemma}[theorem]{Lemma}
\newtheorem{corollary}[theorem]{Corollary}
\newtheorem{proposition}[theorem]{Proposition}
\theoremstyle{definition} \theoremstyle{definition}
\newtheorem{remark}[theorem]{Remark}
\newtheorem{example}[theorem]{Example}
\newtheorem{defn}[theorem]{Definition}
\theoremstyle{remark}
\newcommand{\G}{\textsc{\G}}
\newcommand{\Z}{\mathbb{Z}}
\newcommand{\R}{\mathbb{R}}
\newcommand{\C}{\mathbb{C}}
\def\G{{\rm G}}
\def\Sp{{\rm Sp}}
\def\GL{{\rm GL}}
\def\SO{{\rm SO}}
\def\gln{\GL(n)}
\def\spn{\Sp(2n)}
\def\sop{\SO(p)}
\def\glm{\GL(m)}
\def\spm{\Sp(2m)}
\def\soq{\SO(q)}
\newcommand{\wg}{W_{\mathfrak{g}}}
\newcommand{\wk}{W_{\mathfrak{k}}}
\newcommand{\lambar}{\underline{\lambda}}
\newcommand{\mubar}{\underline{\mu}}
\newcommand{\pilam}{\Pi_{\lambar}}
\newcommand{\psimu}{\Psi_{\mubar}}
\newcommand{\xibar}{\underline{\xi}}
\newcommand{\mubard}{\mubar^{'}}
\newcommand{\Sn}{\mathfrak{S}_n}
\newcommand{\glnm}{{\GL}^{n}_{m}}
\newcommand{\spnm}{{\Sp}^{2n}_{2m}}
\newcommand{\sopq}{{\SO}^{p}_{q}}
\newcommand{\soodq}{{\SO}^{2n+1}_{q}}
\newcommand{\soevq}{{\SO}^{2n}_{q}}
\newcommand{\soodod}{{\SO}^{2n+1}_{2m+1}}
\newcommand{\soodev}{{\SO}^{2n+1}_{2m}}
\newcommand{\soevod}{{\SO}^{2n}_{2m+1}}
\newcommand{\soevev}{{\SO}^{2n}_{2m}}
\newcommand{\fracnom}{\genfrac{\{}{\}}{0pt}{}}
\begin{document}

\title[Branching Formulae for Classical Groups]
{New Branching Formulae for Classical Groups and Relations among them}

\begin{abstract}
We find the branching laws for the classical pairs $\GL(m, \C) \subset \GL(n, \C)$, $\Sp(2m, \C) \subset \Sp(2n, \C)$, $\SO(q, \C) \subset \SO(p, \C)$ for all $m\leq n$, and all $q\leq p$, generalizing the well-known results of classical branching laws which exist for $m=n-1$, and $q=p-1$. Our approach provides a common proof applicable to all these groups. We also compare the branching multiplicities among these pairs. 
\end{abstract}

\author{Dibyendu Biswas}

\address{Indian Institute of Technology Bombay, Powai, Mumbai, India-400076}

\email{dibubis@gmail.com}
\maketitle
    {\hfill \today}
    
\tableofcontents

\section{Introduction}
Branching rules are descriptions of how irreducible representations of a group $G$ decompose under restriction to a subgroup $H$. We are interested in the cases when $G$ and $H$ are  classical groups over complex numbers.
  We take $G$ to be general linear groups $\gln=\GL(n, \C)$, symplectic groups $\spn=\Sp(2n, \C)$, or orthogonal groups $\sop=\SO(p, \C)$.  We consider the following pair of groups $H \subset G$ in this paper,
 \[\begin{aligned}
 	& \GL(m) \subset \GL(n), && 
 	\Sp(2m) \subset \Sp(2n), &&&
 	\SO(q) \subset \SO(p).
 \end{aligned}\]
 We provide formulae (see Theorem~\ref{general gl} - \ref{general ortho}) expressing branching multiplicities  as determinants of certain combinatorial matrices. They are usually not multiplicity-free. We use the Weyl Character formula to prove all the formulae for branching multiplicities as mentioned here. The notation used in this paper as well as the proofs follow \cite[Chapter~8]{GW}.
 
 Our proof of the branching laws depends on a different version of the Weyl dimension formula which is our Theorem~\ref{new weyl dim}, and is a determinantal formula, to which our branching laws reduce to when branching from any group G to the trivial subgroup.

  The results for the pair of classical groups $(H \subset G)$ listed above for the special case  $m=n-1$ and $q=p-1$ have been known for a long time. In his book \cite[V.18]{HWqm}, H. Weyl provided the classical branching description for the pair $\GL(n-1) \subset \GL(n)$.  Concerning the pair $\Sp(2n-2) \subset \Sp(2n)$,  Zelobenko~\cite{ZDP} and Hegerfeldt~\cite{HGC} have established conditions under which the multiplicity is non-zero. The multiplicity formula in this case is due to Whippman~\cite{WML} (for $n=2,3$) and Miller~\cite{MW} (for the general case of $n$). For the pair $\mathrm{SO}(p-1) \subset \mathrm{SO}(p)$, the branching rules are in the book \cite[IX.9]{mur} by F.D. Murnaghan.

After finishing the paper, when we sent it to Prof. Okada, he informed us that Theorems~\ref{general gl}-\ref{general ortho} can be derived from an unpublished preprint \cite{okada89} of his from 1989. He proved his result using Lindström–Gessel–Viennot lemma which counts the number of tuples of non-intersecting lattice paths. He also mentioned the paper \cite{scrim} for similar results.

\section{The Main Theorems}\label{sec main thm}
In this section, we provide the statements of our main theorems. The theorems involve certain binomial coefficients, which we will define before going into the details.
\begin{defn}
Let $k\geq0$ be a integer and $x \in \R$. We define binomial coefficients $\binom{x}{k}$ as follows: 
\[\binom{x}{k}=\begin{cases}
		\frac{x(x-1)(x-2)\cdots (x-k+1)}{k!}, & \text{ if } k\geq 1 \\
	\hspace*{1.5cm}	1, & \text{ if } k=0.
\end{cases}
 \]   
\end{defn}
\begin{defn}\label{def fracnom}
	Let $k\geq0$ be a integer and $n \in \Z$. We define  $\fracnom{n}{k}$ as follows: 
	$$\fracnom{n}{k}= \begin{cases}
		\binom{n}{k}, & \text{ if } n \geq k, \\
		\hspace*{2mm}	0,  & \text{ if } n < k.  
	\end{cases}$$
\end{defn}

The dominant weights of the groups  $\gln$, $\spn$, $\SO(2n+1)$, $\SO(2n)$ are parameterized by sequences of integer $\lambar=\left(\lambda_1, \ldots, \lambda_n\right) \in \mathbb{Z}^n$ satisfying the following conditions.
$$
\begin{array}{ll}
	\lambda_1 \geq \lambda_2 \geq \cdots \geq \lambda_n, & \text{ for }  \gln , \\
	\lambda_1 \geq \lambda_2 \geq \cdots \geq \lambda_n \geq 0, & \text{ for } \Sp(2n) ,  \SO(2n+1), \\
	\lambda_1 \geq \lambda_2 \geq \cdots \geq \lambda_{n-1} \geq\left|\lambda_n\right|, & \text{ for } \SO(2n).
\end{array}
$$
For our branching problem, it is sufficient to consider $\lambda_n \geq 0$ for all groups which we tacitly assume everywhere. All the groups $\gln$, $\spn$, $\SO(2n+1)$, $\SO(2n)$ have rank $n$.

 Let $H \subset G$ be one of the pairs as in the Introduction. Let
		\[\lambar = (\lambda_1 \geq \lambda_2 \geq \cdots \geq \lambda_n \geq 0)
		\quad \text{ and } \quad
		\mubar = (\mu_1 \geq \mu_2 \geq \cdots \geq \mu_{m} \geq 0)\]
		be two sequences of integers. In case $(H \subset G)=\left(\SO(q) \subset \SO(p)\right)$, then $n = \lfloor \frac{p}{2} \rfloor$ and $m = \lfloor \frac{q}{2} \rfloor$. Let $\pilam$ and $\psimu$ denote the irreducible highest weight representations of $G$ and $H$ with the highest weights $\lambar$ and $\mubar$, respectively. Consider the following restriction, 
		\begin{equation}\label{restrict}
			{\pilam}|_H \hspace{.3cm} = \sum_{\mubar} m(\lambar, \mubar) \psimu.
		\end{equation}

 Set $\lambda_{n+i} = 0, \mu_{m+i} = 0 \text{ for } i \geq 1$.
 Let	\[	u_{ij}=\lambda_i - \mu_j + j -i \quad \text{ for } \hspace{.1cm} 1 \leq i,j \leq n.  \]
	
\begin{theorem}\label{general gl}
	Let  $\glm \subset \gln$, with $0 \leq m \leq n-1$. Then,
	\begin{enumerate}
		\item[(i)] The multiplicity
		$m(\lambar, \mubar)$ is nonzero if and only if 
		\begin{equation*}
			\lambda_i \geq \mu_i \geq \lambda_{i+n-m} \quad \text{ for } \quad 1 \leq i \leq m.
		\end{equation*}
		\item[(ii)]  
		The multiplicity $$m(\lambar, \mubar) =\det \left[M_{ij}\right],$$
		 where 
		\begin{equation*}
			M_{ij} = \begin{cases}
				\displaystyle\fracnom{u_{ij} + n-m-1}{n-m-1}, & \text{ if } \quad \hspace*{8.5mm} 1 \leq j \leq m, \vspace{2mm}\\
				\displaystyle\binom{u_{ij} + n-j}{n-j}, & \text{ if } \quad  m+1 \leq j \leq n.
			\end{cases} 
		\end{equation*}
	\end{enumerate}
\end{theorem}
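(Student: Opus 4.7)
My plan is to reduce the branching multiplicity $m(\lambar,\mubar)$ to the evaluation of a skew Schur polynomial at $n-m$ ones via the Weyl character formula, and then convert that evaluation into the claimed determinant using Jacobi--Trudi together with column operations.

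\textbf{Step 1 (WCF to skew Schur identity).} I begin with $\chi_\lambar = A_\lambar/\Delta_n$, write the Weyl denominator as $\Delta_n(x,y)=\Delta_m(x)\,\Delta_{n-m}(y)\prod_{i,k}(x_i-y_k)$, and Laplace-expand the Weyl numerator $A_\lambar(x_1,\ldots,x_m,y_1,\ldots,y_{n-m})$ along its last $n-m$ rows. The standard consequence is the identity
\[
s_\lambar(x,y) \;=\; \sum_{\mubar} s_\mubar(x)\,s_{\lambar/\mubar}(y).
\]
Setting $y_1=\cdots=y_{n-m}=1$ and matching with~\eqref{restrict} yields
\[
m(\lambar,\mubar) \;=\; s_{\lambar/\mubar}(\underbrace{1,\ldots,1}_{n-m}),
\]
where $\mubar$ is padded to length $n$ by zeros.

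\textbf{Step 2 (Jacobi--Trudi specialization).} I apply the Jacobi--Trudi formula $s_{\lambar/\mubar}=\det[h_{\lambda_i-\mu_j-i+j}]_{i,j=1}^n$ and specialize using $h_k(1^{n-m})=\binom{n-m+k-1}{n-m-1}$ for $k\geq 0$ (and $0$ for $k<0$). This presents $m(\lambar,\mubar)$ as the $n\times n$ determinant with entries $\fracnom{u_{ij}+n-m-1}{n-m-1}$. For $1\leq j\leq m$ these entries already match $M_{ij}$ in the theorem. For $j>m$, since $\mu_j=0$, the entry simplifies to $\binom{c_i+(j-m-1)}{n-m-1}$ with $c_i:=\lambda_i+n-i$.

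\textbf{Step 3 (column operations).} I then modify columns $m+1,\ldots,n$ left-to-right via iterated Pascal rule, equivalently the Vandermonde convolution $\binom{c+s}{k}=\sum_{r=0}^{s}\binom{s}{r}\binom{c}{k-r}$. At step $s\geq 0$, having already converted columns $m+1,\ldots,m+s$ to $\binom{c_i}{n-m-1-r}$ for $r=0,\ldots,s-1$, subtracting $\binom{s}{r}$ times these from column $m+1+s$ turns its entry into $\binom{c_i}{n-m-1-s}=\binom{\lambda_i+n-i}{n-(m+1+s)}=M_{i,m+1+s}$. Since such operations preserve the determinant, this establishes (ii); as a sanity check, setting $m=0$ recovers Theorem~\ref{new weyl dim}.

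\textbf{Step 4 (part (i) and main obstacle).} For part~(i), I invoke two standard facts: $s_{\lambar/\mubar}\equiv 0$ unless $\lambar\supseteq\mubar$ (giving $\lambda_i\geq\mu_i$), and $s_{\lambar/\mubar}(1^{n-m})$ counts semistandard tableaux of shape $\lambar/\mubar$ with entries in $\{1,\ldots,n-m\}$, which is nonzero iff every column of $\lambar/\mubar$ has length $\leq n-m$---an elementary calculation shows this is equivalent to $\mu_i\geq\lambda_{i+n-m}$ for $1\leq i\leq m$. The principal technical obstacle lies in Step~3: one must verify that the prescribed column operations applied in the correct order yield precisely the stated binomial coefficients, and confirm that the $\fracnom{\cdot}{\cdot}$ convention correctly handles entries in the first $m$ columns where $u_{ij}<0$ may occur, so that no spurious nonzero contributions appear.
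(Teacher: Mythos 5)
Your argument is correct, but it is a genuinely different route from the one in the paper, and it is worth noting what each buys. You reduce $m(\lambar,\mubar)$ to the principal specialization $s_{\lambar/\mubar}(1^{n-m})$ via the expansion $s_\lambar(x,y)=\sum_{\mubar}s_{\mubar}(x)s_{\lambar/\mubar}(y)$, then apply Jacobi--Trudi with $h_k(1^{n-m})=\binom{k+n-m-1}{n-m-1}$ and convert columns $m+1,\dots,n$ by Vandermonde-convolution column operations; part (i) comes from the semistandard-tableau interpretation of $s_{\lambar/\mubar}(1^{n-m})$. The paper instead runs a uniform Kostant-type argument: the Weyl character formula for both $H$ and $G$, a partition function $\wp_\Sigma$ supported on $\{\varepsilon_1,\dots,\varepsilon_m\}$ with multiplicity $r=n-m$, a decomposition of the Weyl numerator over $W_{\mathfrak{k}}\backslash W_{\mathfrak{g}}$ with $K\cong\GL(n-m)$, and the determinantal reformulation of the Weyl dimension formula (Theorem~\ref{new weyl dim}); part (i) is then proved by induction on $n$ directly from the determinant. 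The two proofs are closely parallel under the hood --- the paper's partition values $\fracnom{\xi_j+r-1}{r-1}$ are exactly your $h_{\xi_j}(1^{n-m})$, and the $\dim(\chi_{\tilde\lambda})$ factor plays the role of your columns $j>m$ --- but your route is specific to $\GL$, since the skew Schur/Jacobi--Trudi formalism does not transfer verbatim to the symplectic and orthogonal pairs, whereas the paper's single argument covers Theorems~\ref{general gl}--\ref{general ortho} simultaneously. Your two points of caution in Step 4 both check out: for $1\le j\le m$ the convention $\fracnom{u_{ij}+n-m-1}{n-m-1}=0$ exactly when $u_{ij}<0$ reproduces $h_{u_{ij}}=0$, and for $j>m$ one has $u_{ij}+n-j=\lambda_i+n-i\ge 0$, so the specialized Jacobi--Trudi entries agree with ordinary binomials and the left-to-right column operations land precisely on $M_{ij}=\binom{u_{ij}+n-j}{n-j}$.
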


\begin{theorem}\label{general sp}
	Let $\spm \subset \spn$, with $0 \leq m \leq n-1$.
	Then,
	\begin{enumerate}
		\item[(i)] The multiplicity 
		$m(\lambar, \mubar)$ is nonzero if and only if 
		\begin{equation*}
			\lambda_i \geq \mu_i \geq \lambda_{i+2n-2m} \quad \text{ for } \quad 1 \leq i \leq m.
		\end{equation*}
		
		\item[(ii)]  
		The multiplicity
		 $$m(\lambar, \mubar) =\det \left[M_{ij}\right],$$
		  where 
		\begin{equation*}
			M_{ij} = \begin{cases}
				\displaystyle\fracnom{u_{ij} + 2n-2m-1}{2n-2m-1}, & \text{ if } \quad \hspace*{.85cm} 1 \leq j \leq m, \vspace{2mm}\\
				\displaystyle\binom{u_{ij} + 2n-2j+1}{2n-2j+1}, & \text{ if } \quad m+1 \leq j \leq n.
			\end{cases} 
		\end{equation*}
	\end{enumerate}
\end{theorem}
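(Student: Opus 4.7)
The plan is to mimic the character-theoretic proof of Theorem~\ref{general gl}, adapted to the type $C$ Weyl character formula. Starting from
\[\chi_\lambar^{\Sp(2n)}(t_1,\ldots,t_n) = \frac{\det_{i,j}\bigl[t_j^{\lambda_i+n-i+1}-t_j^{-(\lambda_i+n-i+1)}\bigr]}{\det_{i,j}\bigl[t_j^{n-i+1}-t_j^{-(n-i+1)}\bigr]}\]
and the branching identity \eqref{restrict}, the standard block-diagonal embedding $\Sp(2m)\hookrightarrow\Sp(2n)$ sends the maximal torus of $\Sp(2m)$ into the subtorus of $\Sp(2n)$ defined by $t_{m+1}=\cdots=t_n=1$. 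Consequently
\[\chi_\lambar^{\Sp(2n)}(t_1,\ldots,t_m,1,\ldots,1) = \sum_\mubar m(\lambar,\mubar)\,\chi_\mubar^{\Sp(2m)}(t_1,\ldots,t_m).\]
Both the numerator and denominator of the $\Sp(2n)$-character vanish under this specialization (the denominator acquires repeated rows), so the left-hand side has to be interpreted as a limit, exactly the kind of degenerate limit that converts the Weyl character formula into the determinantal Weyl dimension formula of Theorem~\ref{new weyl dim}.

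Next I perform a Laplace expansion of the $\Sp(2n)$-numerator determinant along its last $n-m$ columns. The minors supported on the first $m$ columns retain the $t_1,\ldots,t_m$ dependence and, after dividing by the $\Sp(2m)$-Weyl denominator and grouping the result by the characters $\chi_\mubar^{\Sp(2m)}$, yield the branching-active cofactors, which simplify to the entries $\fracnom{u_{ij}+2n-2m-1}{2n-2m-1}$ in the columns $1\leq j\leq m$. The complementary minors, evaluated at $t_{m+1}=\cdots=t_n=1$ by iterated L'Hospital/Taylor expansion of each symplectic factor $t^a-t^{-a}$, collapse into the spectator-block Weyl dimension determinant of Theorem~\ref{new weyl dim}, producing the remaining entries $\binom{u_{ij}+2n-2j+1}{2n-2j+1}$ in the columns $m+1\leq j\leq n$. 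Linear independence of $\Sp(2m)$-characters then extracts $m(\lambar,\mubar)$ as the asserted $n\times n$ determinant. For part (i), the convention of Definition~\ref{def fracnom} makes $\fracnom{u_{ij}+2n-2m-1}{2n-2m-1}=0$ whenever $u_{ij}<0$, i.e., $\mu_j>\lambda_i+j-i$; a standard triangular-zero-block argument on $[M_{ij}]$ then forces $\det[M_{ij}]=0$ unless $\lambda_i\geq\mu_i$ for $1\leq i\leq m$, and a companion vanishing analysis of the second block of columns enforces the lower interlacing $\mu_i\geq\lambda_{i+2(n-m)}$.

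The main obstacle I anticipate is the explicit limit computation that identifies the degenerate cofactor with the binomial-coefficient determinant of Theorem~\ref{new weyl dim}. One must control how each factor $t^a-t^{-a}$ and its first several derivatives at $t=1$ recombine, via a Vandermonde/Cauchy--Binet manipulation, into the determinantal Weyl dimension formula in rank $n-m$; this is the type-$C$ refinement of the analogous step for $\GL$ in Theorem~\ref{general gl}. Once it is in place, pasting the spectator block with the $\fracnom$ block into a single $n\times n$ mixed-entry determinant is largely bookkeeping, and the argument should proceed uniformly with the orthogonal cases of Theorem~\ref{general ortho}.
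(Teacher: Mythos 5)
Your route (specializing the type-$C$ bialternant at $t_{m+1}=\cdots=t_n=1$ and Laplace-expanding the numerator along the last $n-m$ columns) is genuinely different from the paper's, which instead splits $\Phi_{\mathfrak g}^{+}$ into the roots restricting to zero on $\mathfrak t_{\mathfrak h}$ (these form $\Phi_{\mathfrak k}^{+}$ for $K\cong \Sp(2n-2m)$, whose determinantal dimension formula, Theorem~\ref{new weyl dim}, supplies the last $n-m$ columns) and the remaining roots, whose restricted product is inverted via a Kostant-type partition function (Lemma~\ref{lemma partition}), supplying the first $m$ columns. Your strategy is viable in principle, but as written it has a concrete gap in the mechanism producing the first block of the matrix. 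An $m\times m$ minor $\det\bigl[t_j^{a_i}-t_j^{-a_i}\bigr]_{i\in S,\,1\le j\le m}$ divided by the $\Sp(2m)$ Weyl denominator is, by the Weyl character formula, just $\pm\chi^{\Sp(2m)}_{\mu(S)}$ for a single dominant weight $\mu(S)$ (or zero); there is nothing in that quotient that can "simplify to" the entries $\fracnom{u_{ij}+2n-2m-1}{2n-2m-1}$. Those binomial coefficients come from a step you never perform: under the specialization, the $\Sp(2n)$ Weyl denominator factors as the $\Sp(2m)$ denominator times a vanishing block (which cancels against your spectator minors) times the \emph{non-vanishing} surplus $\prod_{i=1}^{m}\bigl(t_i^{-1}(t_i-1)^2\bigr)^{\,n-m}$, and it is the geometric-series expansion of the reciprocal of this surplus, $\prod_{i=1}^{m}(1-e^{-\varepsilon_i})^{-(2n-2m)}=\sum_{\xi}\wp_{\Sigma}(\xi)e^{-\xi}$, that produces $\binom{\xi_i+2n-2m-1}{2n-2m-1}$. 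Without this expansion (and the subsequent re-collection of $e^{\text{weight}}$ terms against the $\Sp(2m)$ alternant to isolate the coefficient of $e^{\mubar}$), the asserted entries in columns $1\le j\le m$ do not appear.

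There is a second gap in part (i): you argue only that failure of the interlacing forces a zero column or a dependent set of rows, i.e.\ the direction ``nonzero $\Rightarrow$ interlacing.'' The converse --- that $\det[M_{ij}]\neq 0$ whenever $\lambda_i\ge\mu_i\ge\lambda_{i+2n-2m}$ holds --- is the harder half of the equivalence and is not addressed; the paper establishes it by induction on $n$ (for fixed $m$), locating $\mu_1$ among $\lambda_1,\dots,\lambda_{2(n-m)+1}$ and reducing to an $(n-1)\times(n-1)$ determinant of the same shape. Finally, the ``iterated L'Hospital'' identification of the complementary minors with the rank-$(n-m)$ symplectic dimension determinant, which you flag as the main obstacle, is indeed the delicate point of your approach and would need to be carried out (it amounts to showing $\det[\sinh(a_ix_j)]$ has leading term proportional to $\prod_{i<i'}(a_i^2-a_{i'}^2)\prod_i a_i$ as all $x_j\to 0$); the paper avoids this limit entirely by working with the coset decomposition of $W_{\mathfrak g}$ over $W_{\mathfrak k}$ before restricting.
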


\begin{theorem}\label{general ortho}
	Let $\soq \subset \sop$ with $0 \leq q \leq p-1$. Let $n = \lfloor \frac{p}{2} \rfloor$ and $m = \lfloor \frac{q}{2} \rfloor$. We define  $l$ as follows: 
	\[ l= \begin{cases}
		n-m, & \text{ if } \quad p=2n+1, \\
		n-m-1, & \text{ if } \quad p=2n.
	\end{cases} \]
	Then,
	\begin{enumerate}
		\item[(i)] The multiplicity 
		$m(\lambar, \mubar)$ is nonzero if and only if 
		\begin{equation*}
			\lambda_i \geq \mu_i \geq \lambda_{i+p-q} \quad \text{ for } \quad 1 \leq i \leq m,
		\end{equation*}
		
		\item[(ii)]  
		The multiplicity 
		$$m(\lambar, \mubar) =2^{l} \det \left[M_{ij}\right],$$ where 
		\begin{equation*}
			M_{ij} = \begin{cases}
				\displaystyle\fracnom{u_{ij} + p-q-1}{p-q-1}, & \text{ if } \quad \hspace*{.85cm} 1 \leq j \leq m, \vspace{2mm}\\
			\displaystyle\binom{u_{ij} + p-2j-\frac{1}{2}}{p-2j}, & \text{ if } \quad m+1 \leq j \leq n.
			\end{cases} 
		\end{equation*}
	\end{enumerate}
\end{theorem}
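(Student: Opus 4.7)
The plan is to adapt the strategy used for Theorems~\ref{general gl} and \ref{general sp}: compute the branching multiplicity via the Weyl character formula and Weyl orthogonality on the maximal torus of $H=\SO(q)$, then collapse the resulting alternating polynomial into a single determinant. In the special case $\mubar=0$ this degenerates to the determinantal Weyl dimension formula of Theorem~\ref{new weyl dim}, and the columns $j>m$ of $[M_{ij}]$ are forced to agree with the columns of that formula, while the columns $j\leq m$ encode the genuine branching data.

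First, fix a maximal torus $T_H\subset T_G$ compatible with the block-diagonal embedding $\SO(q)\hookrightarrow\SO(p)$, so that $T_H$ is parametrized by the first $m$ eigenvalue-pairs of $T_G$ while the remaining $n-m$ pairs (together with the fixed $\pm 1$ coordinate when $p$ or $q$ is odd) are specialized to $1$. By Weyl orthogonality, the multiplicity $m(\lambar,\mubar)$ is the coefficient of the alternating monomial $A_{\mubar+\rho_H}(t)$ in the alternating polynomial $A_{\rho_H}(t)\cdot\mathrm{ch}(\pilam)|_{T_H}(t)$, where $A_{\rho_H}$ is the Weyl denominator of $H$. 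Writing the character of $\pilam$ via the Weyl character formula for $\SO(p)$ and performing the specialization gives $\mathrm{ch}(\pilam)|_{T_H}$ as a quotient of alternating sums in the variables $t_1,\ldots,t_m$.

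Next, I would evaluate the specialized Weyl denominator of $G$ in the complementary coordinates as a closed product and expand its inverse as a binomial series. This expansion produces exactly the entries $\binom{u_{ij}+p-2j-\tfrac12}{p-2j}$ of $[M_{ij}]$ for $j>m$, matching the corresponding columns in Theorem~\ref{new weyl dim}, while the coefficient extraction with respect to $A_{\mubar+\rho_H}$ yields the entries $\fracnom{u_{ij}+p-q-1}{p-q-1}$ for $j\leq m$, with the truncation rule enforced by pairwise cancellations in the alternating sum when the relevant exponent would be negative. The prefactor $2^l$ arises from normalizing the Weyl denominators of $G$ and $H$ in the specialization, and (when $p$ is even) from the $\pm\lambda_n$-ambiguity of type $D_n$ absorbing one extra factor of $2$. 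Part (i) then follows from the Lindström--Gessel--Viennot interpretation of the determinant as a count of tuples of non-intersecting lattice paths (which is nonnegative and vanishes precisely outside the interlacing region); equivalently, iterating the classical multiplicity-free branching $\SO(r)\supset\SO(r-1)$ yields the conditions $\lambda_i\geq\mu_i\geq\lambda_{i+p-q}$ directly.

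The main obstacle will be the case analysis across the four parity combinations $(p,q)\in\{(\mathrm{odd},\mathrm{odd}),(\mathrm{odd},\mathrm{even}),(\mathrm{even},\mathrm{odd}),(\mathrm{even},\mathrm{even})\}$: the precise form of the Weyl denominator, the specialization values, and the location of the missing factor of~$2$ differ in each case. That the half-integer shift $p-2j-\tfrac12$ in the binomial coefficients produces a uniform expression across these four cases is precisely what allows the theorem to admit a single statement, and verifying this uniformity (and in particular that the power of $2$ always collapses to $2^l$) is the most delicate part of the argument.
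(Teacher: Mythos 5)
Your overall strategy coincides with the paper's: apply the Weyl character formula to both $G$ and $H$, read off $m(\lambar,\mubar)$ as the coefficient of $\mathrm{e}^{\mubar}$ in $R_{\mathfrak h}\cdot\pilam|_H$, expand the inverse of the ``leftover'' part of the restricted denominator as a binomial series, and assemble everything into an $n\times n$ determinant whose last $n-m$ columns are those of Theorem~\ref{new weyl dim}. However, one step is described in a way that cannot work as written. The columns $j>m$ do \emph{not} arise from expanding the inverse of the specialized Weyl denominator in the complementary coordinates: every factor $1-\mathrm e^{-\alpha}$ with $\alpha$ supported on $\varepsilon_{m+1},\dots,\varepsilon_n$ restricts to $0$ on $\mathfrak t_{\mathfrak h}$, so that product has no invertible specialization and no binomial series. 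The paper instead splits $\Phi_{\mathfrak g}^{+}=P\sqcup Q$ with $P=\{\alpha:\bar\alpha=0\}$, identifies $P$ with the positive system of $K\cong\SO(p-2m)$, groups the Weyl sum for $G$ into cosets over $W_{\mathfrak k}\setminus W_{\mathfrak g}$, and recognizes the inner $W_{\mathfrak k}$-sum, divided by $\mathrm e^{\rho_{\mathfrak k}}R_{\mathfrak k}$, as the Weyl character formula for $K$; its restriction is $\dim(\chi_{\tilde\lambda})$. That dimension, written determinantally via Theorem~\ref{new weyl dim}, is what supplies both the columns $j>m$ and the prefactor $2^{l}$ ($2^{n-m}$ for $K=\SO(2(n-m)+1)$, $2^{n-m-1}$ for $K=\SO(2(n-m))$; in particular $l$ depends only on the parity of $p$, so your worry about four parity cases for the power of $2$ dissolves). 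The binomial expansion $(1-z)^{-r}=\sum_{\ell\ge0}\binom{\ell+r-1}{r-1}z^{\ell}$ with $r=p-q$ applies only to the restricted roots $\bar\alpha=\varepsilon_i$, $1\le i\le m$, each occurring with multiplicity $p-q$, and produces the columns $j\le m$; the truncation in $\fracnom{n}{k}$ is simply the absence of negative powers in this series, not a cancellation in the alternating sum.

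A second essential reduction is missing from your sketch: for types $B$ and $D$ the Weyl group is hyperoctahedral, so the coset sum over $W_{\mathfrak k}\setminus W_{\mathfrak g}$ is not yet indexed by $\mathfrak S_{n-m}\setminus\mathfrak S_n$, and without fixing this you do not obtain an $n\times n$ determinant. The paper shows that any coset representative involving a nontrivial sign change sends some coordinate of $\overline{s\cdot(\lambar+\rho_{\mathfrak g})}-\mubar-\overline{\rho_{\mathfrak g}}$ negative, where the partition function vanishes, so only the symmetric-group cosets survive. Finally, for part (i) the paper argues by induction on $n$ directly from the determinant (the first column vanishes if $\mu_1>\lambda_1$; the first $n-m+1$ rows are dependent if $\mu_1<\lambda_{n-m+1}$; otherwise expand along the first column and induct). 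Your two alternatives --- LGV positivity or iterating the one-step branching --- are legitimate, but the LGV route presupposes identifying the determinant with a non-intersecting lattice-path count, which is Okada's argument rather than anything established in this paper.
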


\begin{remark}
	 Putting $m = 0$ in Theorem~\ref{general gl} - \ref{general ortho} gives the corresponding Weyl dimension formula (Theorem~\ref{new weyl dim}) for general linear, symplectic and orthogonal groups, respectively. In fact, we first prove these dimension formulae (different looking than the usual Weyl dimension formula) which goes into the proof of Theorem~\ref{general gl} - \ref{general ortho}.
	 \end{remark}

\begin{remark}
	Put $m=n-1$ in Theorem~\ref{general gl}.  The inequality $\lambda_i \geq \mu_i \geq \lambda_{i+1}$ gives rise to $\lambda_1 \geq \mu_1 \geq \cdots \geq \lambda_{n-1} \geq \mu_{n-1} \geq \lambda_n$.  Applying this condition, we get $u_{ii}=1$ for $1\leq i \leq n$, while $u_{ij}<0$ for $1\leq j <i \leq n$. Therefore, the corresponding matrix becomes an upper triangular matrix where all the diagonal entries are equal to 1. Hence the multiplicity is 1 when we have the interlacing condition. So, we obtain the branching rule for $\GL(n-1) \subset \gln$. 
	A similar argument can be given for other pairs.
\end{remark}

 We use the following notation for pairs:
\[\begin{array}{rl}
	\begin{aligned}
		\glnm \, &:=\\
		\spnm \, &:=\\
		\sopq \, &:=
	\end{aligned} & \begin{aligned}
		\left( \right. \GL(m) &\subset \GL(n) \left. \right),\\
		\left( \right. \Sp(2m) &\subset \Sp(2n) \left. \right),\\
		\left( \right. \SO(q) &\subset \SO(p) \left. \right).
	\end{aligned}
\end{array}\]
\begin{example}We derive explicit multiplicity formulae as determinants from Theorem~\ref{general gl}-\ref{general ortho} for all pairs $H \subset G$, with the condition that $\text{rank }(G)=3$ and $\text{rank }(H)=1$. We use $|a_{ij}|$ to represent the determinant of the matrix  $ \begin{bmatrix}
		a_{ij}
	\end{bmatrix}$ in the following table.

  \begin{center}
	\newcolumntype{C}[1]{>{\centering\let\newline\\\arraybackslash\vspace{5pt}}m{#1}}
	\begingroup
	\setlength{\tabcolsep}{8pt} 
	\renewcommand{\arraystretch}{2} 
	\captionof{table}{Multiplicity Formula for $H \subset G$ with $\text{rk }(G)=3$, $\text{rk }(H)=1$.}
	\label{tab hihi}
	\begin{tabular}{|C{.2in}|C{2.3in}||c|C{1.9in}|}
		\hline
		Pair &  \setlength{\baselineskip}{10pt} $m(\lambar,\mubar)$ & Pair &  \setlength{\baselineskip}{10pt} $m(\lambar,\mubar)$  \\
		\hline \hline  
		$\Sp^6_2$ & \setlength{\baselineskip}{15pt} $\begin{vmatrix}
			\fracnom{\lambda_1 - \mu_1 +3}{3}&\binom{\lambda_1  +4}{3}& \binom{\lambda_1  +3}{1} \\ 
			\fracnom{\lambda_2 - \mu_1 +2}{3}&\binom{\lambda_2  +3}{3}& \binom{\lambda_2  +2}{1}\\
			\fracnom{\lambda_3 - \mu_1 +1}{3}&\binom{\lambda_3  +2}{3}& \binom{\lambda_3  +1}{1}
		\end{vmatrix}$ &$\GL^3_1$ & $\begin{vmatrix}
			\fracnom{\lambda_1 - \mu_1 +1}{1}&\binom{\lambda_1  +2}{1}& 1 \\ 
			\fracnom{\lambda_2 - \mu_1 }{1}&\binom{\lambda_2  +1}{1}& 1 \\
			\fracnom{\lambda_3 - \mu_1 -1}{1}&\binom{\lambda_3  }{1}& 1
		\end{vmatrix}$  \\  \hline 
		$\SO^7_3$ & $2^2 \cdot \begin{vmatrix}
			\fracnom{\lambda_1 - \mu_1 +3}{3}&\binom{\lambda_1  +3\frac{1}{2}}{3}& \binom{\lambda_1  +2\frac{1}{2}}{1} \\ 
			\fracnom{\lambda_2 - \mu_1 +2}{3}&\binom{\lambda_2  +2\frac{1}{2}}{3}& \binom{\lambda_2  +1\frac{1}{2}}{1}\\
			\fracnom{\lambda_3 - \mu_1 +1}{3}&\binom{\lambda_3  +1\frac{1}{2}}{3}& \binom{\lambda_3  +\frac{1}{2}}{1}
		\end{vmatrix}$ & $\SO^6_2$ & $2 \cdot\begin{vmatrix}
			\fracnom{\lambda_1 - \mu_1 +3}{3}&\binom{\lambda_1  +2\frac{1}{2}}{2}& 1 \\ 
			\fracnom{\lambda_2 - \mu_1 +2}{3}&\binom{\lambda_2  +1\frac{1}{2}}{2}& 1\\
			\fracnom{\lambda_3 - \mu_1 +1}{3}&\binom{\lambda_3  +\frac{1}{2}}{2}& 1
		\end{vmatrix}$ \\ \hline 
		$\SO^7_2$& 	$ 2^2 \cdot \begin{vmatrix}
			\fracnom{\lambda_1 - \mu_1 +4}{4}&\binom{\lambda_1  +3\frac{1}{2}}{3}& \binom{\lambda_1  +2\frac{1}{2}}{1} \\ 
			\fracnom{\lambda_2 - \mu_1 +3}{4}&\binom{\lambda_2  +2\frac{1}{2}}{3}& \binom{\lambda_2  +1\frac{1}{2}}{1}\\
			\fracnom{\lambda_3 - \mu_1 +2}{4}&\binom{\lambda_3  +1\frac{1}{2}}{3}& \binom{\lambda_3  +\frac{1}{2}}{1}
		\end{vmatrix} $  & $\SO^6_3$	& $ 2 \cdot \begin{vmatrix}
			\fracnom{\lambda_1 - \mu_1 +2}{2}&\binom{\lambda_1  +2\frac{1}{2}}{2}& 1 \\ 
			\fracnom{\lambda_2 - \mu_1 +1}{2}&\binom{\lambda_2  +1\frac{1}{2}}{2}& 1 \\
			\fracnom{\lambda_3 - \mu_1 }{2}&\binom{\lambda_3  +\frac{1}{2}}{2}& 1
		\end{vmatrix} $   \\
		\hline	\end{tabular}
	\endgroup
	\vspace{3mm}
\end{center}

	Using Table~\ref{tab hihi} we calculate,
	$$ \Pi_{(2,1,0)}|^{Sp(6)}_{Sp(2)} = {20} \times \Psi_{(0)} \ + \ 16 \times \Psi_{(1)} \ + \ {4} \times \Psi_{(2)},$$ 
	where the multiplicities are given by the following determinant calculation:
	$$20= \begin{vmatrix}
		10 & 20 & 5 \\
		1 & 4 & 3 \\
		0 & 0 & 1
	\end{vmatrix}, \quad 16= \begin{vmatrix}
	4 & 20 & 5 \\
	0 & 4 & 3 \\
	0 & 0 & 1
	\end{vmatrix}, \quad 4= \begin{vmatrix}
	1 & 20 & 5 \\
	0 & 4 & 3 \\
	0 & 0 & 1
	\end{vmatrix}. $$

Again using Table~\ref{tab hihi},
	$$ \Pi_{(2,1,0)}|^{SO(7)}_{SO(3)} = {20} \times \Psi_{(0)} \ + \ 20 \times \Psi_{(1)} \ + \ {5} \times \Psi_{(2)},$$ 
	where the multiplicities are given by the following determinant calculation:
	$$20= 2^2 \cdot \begin{vmatrix}
	\vspace{2mm}	10 & \frac{11}{2} \frac{9}{2} \frac{7}{2} \frac{1}{3!} & \frac{9}{2} \\ \vspace{2mm}
		1 & \frac{7}{2} \frac{5}{2} \frac{3}{2} \frac{1}{3!} & \frac{5}{2} \\ \vspace{2mm}
		0 & \frac{3}{2} \frac{1}{2} \frac{-1}{2} \frac{1}{3!} & \frac{1}{2}
	\end{vmatrix}, \quad 20 =2^2 \cdot \begin{vmatrix}
	\vspace{2mm}	4 & \frac{11}{2} \frac{9}{2} \frac{7}{2} \frac{1}{3!} & \frac{9}{2} \\ \vspace{2mm}
	0 & \frac{7}{2} \frac{5}{2} \frac{3}{2} \frac{1}{3!} & \frac{5}{2} \\ \vspace{2mm}
	0 & \frac{3}{2} \frac{1}{2} \frac{-1}{2} \frac{1}{3!} & \frac{1}{2}
	\end{vmatrix}, \quad 5=2^2 \cdot \begin{vmatrix}
	\vspace{2mm}	1 & \frac{11}{2} \frac{9}{2} \frac{7}{2} \frac{1}{3!} & \frac{9}{2} \\ \vspace{2mm}
	0 & \frac{7}{2} \frac{5}{2} \frac{3}{2} \frac{1}{3!} & \frac{5}{2} \\ \vspace{2mm}
	0 & \frac{3}{2} \frac{1}{2} \frac{-1}{2} \frac{1}{3!} & \frac{1}{2}
	\end{vmatrix}. $$

We omit calculations for other pairs. We summarize and get the following table:

\begin{center}
	\begingroup
	\setlength{\tabcolsep}{8pt} 
	\renewcommand{\arraystretch}{1.75} 
	\captionof{table}{Calculation of $\Pi_{(2,1,0)}|^G_H$ for different groups.}\label{table all}	\begin{tabular}{|c||c|c|c|}
		\hline
		Pair &  \setlength{\baselineskip}{15pt} $m((2,1,0),(0))$ & $m((2,1,0),(1))$ & $m((2,1,0),(2))$ \\
		\hline \hline
		$\Sp^6_2$ & 20 & 16 & 4 \\ \hline
		$\SO^7_3$ & 20 & 20 & 5   \\ \hline
		$\SO^6_2$ & 24 & 16 & 4   \\ \hline	\hline
		$\SO^6_3$ & 8 & 12 & 4   \\ \hline 
		$\SO^7_2$ & 45 & 25 & 5   \\ \hline \hline
		$\GL^3_1$ & 2 & 4 & 2   \\ \hline \end{tabular}
	\endgroup
	\vspace{3mm}
\end{center}

We verify Table~\ref{table all} using Sage \cite{sage}. Within Sage, we use the Branching Rules module \cite{sagebran}.
\end{example}

\section{Preliminaries}\label{sec prelim}
Let $H \subset G$ be classical groups with Lie algebras $\mathfrak{h} \subset \mathfrak{g}$. Let $T_G$ and $T_H$ be maximal algebraic tori in $G$ and $H$ respectively with $T_H \subset T_G$. Let $\mathfrak{t}_{\mathfrak{g}}$ and $\mathfrak{t}_{\mathfrak{h}}$ be the corresponding Lie algebras of torus. Let $\Phi_{\mathfrak{g}}$ and $\Phi_{\mathfrak{h}}$ 
be the roots of  $\mathfrak{g}$ and $\mathfrak{h}$ respectively. Let $\Phi_{\mathfrak{g}}^{+}$ and $\Phi_{\mathfrak{h}}^{+}$ be a system of positive roots for $\mathfrak{g}$ and $\mathfrak{h}$ respectively such that the restriction of a positive root of $\mathfrak{g}$ to $\mathfrak{t}_{\mathfrak{h}}$ is either zero or positive.
After fixing the simple roots \cite[Subsection 2.4.3]{GW} we can describe associated positive roots $\Phi^{+}_{\mathfrak{g}}$ as follows:
\begin{equation}\label{pos root}
	\Phi_{\mathfrak{g}}^{+} = \begin{cases}
		\begin{aligned}
			& \left\{\varepsilon_i -   \varepsilon_j : 1 \leq i<j \leq n\right\}, && \text{ for } \gln, \\
			& \left\{\varepsilon_i \pm   \varepsilon_j : 1 \leq i<j \leq n\right\},  && \text{ for } \SO(2n), \\
			& \left\{\varepsilon_i \pm   \varepsilon_j : 1 \leq i<j \leq n\right\} \cup \{2 \varepsilon_i : 1 \leq i \leq n \}, && \text{ for } \Sp(2n),\\
			&	\left\{\varepsilon_i \pm   \varepsilon_j : 1 \leq i<j \leq n\right\} \cup \{ \varepsilon_i : 1 \leq i \leq n \}, && \text{ for } \SO(2n+1).
		\end{aligned}
	\end{cases} 
\end{equation}
We define  $\rho_{\mathfrak{g}}$, $R_{\mathfrak{g}}$ for $G$ as follows:
\begin{equation}\label{rhog and rg}
	\rho_{\mathfrak{g}}=\frac{1}{2} \sum_{\alpha \in \Phi_{\mathfrak{g}}^{+}} \alpha, \quad \quad R_{\mathfrak{g}}= \prod_{\alpha \in \Phi_{\mathfrak{g}}^{+}}\left(1-\mathrm{e}^{-\alpha}\right). 
\end{equation} 
 We denote \(\rho_{\mathfrak{g}}=\sum_{i=1}^{n} \rho_i \varepsilon_i\). The description of $\rho_i$ for each group $G$  is as follows:
\begin{equation}\label{rho}
	\rho_i= \begin{cases}
		n-i, & \text{ for } \quad \gln ,  \SO(2n), \\
		n-i+1, & \text{ for } \quad \Sp(2n), \\
		n-i+\frac{1}{2}, & \text{ for } \quad \SO(2n+1).
	\end{cases} 
\end{equation}

\begin{defn}{(Weyl Denominator)}
	\begin{equation}\label{delg}
		\Delta_{\mathfrak{g}} = \mathrm{e}^{\rho_{\mathfrak{g}}} \prod_{\alpha \in \Phi_{\mathfrak{g}}^{+}}\left(1-\mathrm{e}^{-\alpha}\right) =\mathrm{e}^{\rho_{\mathfrak{g}}} \cdot R_{\mathfrak{g}}.
	\end{equation}
\end{defn}
We denote the Weyl group of $G$ as $W_{\mathfrak{g}}$. The description of $W_{\mathfrak{g}}$ is as follows:
\begin{align*}
	W_{\mathfrak{g}}=
	\begin{cases}
		\mathfrak{S}_n, & \text{ for } \quad \gln, \\
		(\Z/{2\Z})^{n} \rtimes \mathfrak{S}_n, & \text{ for } \quad  \Sp(2n) ,  \SO(2n+1), \\
		(\Z/{2\Z})^{n-1} \rtimes \mathfrak{S}_n, & \text{ for } \quad  \SO(2n),
	\end{cases}
\end{align*}
where $
(\Z/{2\Z})^{n}=\left\langle\sigma_1, \ldots, \sigma_n\right\rangle$ and  $\sigma_i$ acts as $\sigma_i \varepsilon_j=(-1)^{\delta_{i j}} \varepsilon_j$.

\begin{theorem}\label{weyl char formula}
	(Weyl Character Formula). Let $\lambar$ be a dominant integral weight of $\mathfrak{t}_{\mathfrak{g}}$ and $\pilam$ the corresponding finite-dimensional irreducible $G$-module. Then
	$$
	\Delta_{\mathfrak{g}} \cdot \operatorname{ch}\left(\pilam \right)=\sum_{s \in W_{\mathfrak{g}}} \operatorname{sgn}(s) \mathrm{e}^{s \cdot(\lambar+\rho_{\mathfrak{g}})}.
	$$
\end{theorem}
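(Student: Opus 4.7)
The plan is to follow the classical analytic approach via Schur orthogonality on a compact real form, broadly in the style of \cite[Chapter~7]{GW}. First I would pass to a maximal compact subgroup $K \subset G$ with maximal torus $T \subset K$ obtained from $T_G$, so that the characters $\operatorname{ch}(\pilam)$ restrict to continuous class functions on $K$ and the Weyl integration formula
\[ \int_K f(k)\, dk \;=\; \frac{1}{|W_{\mathfrak{g}}|} \int_T f(t)\, |\Delta_{\mathfrak{g}}(t)|^{2}\, dt\]
holds for class functions $f$. The Peter--Weyl orthogonality then pulls back to the torus to give
\[ \int_T \operatorname{ch}(\pilam)\, \overline{\operatorname{ch}(\pimu)}\, |\Delta_{\mathfrak{g}}|^{2}\, dt \;=\; |W_{\mathfrak{g}}|\, \delta_{\lambar, \mubar}.\]

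Next, I would exploit the antisymmetry of $\Delta_{\mathfrak{g}} \cdot \operatorname{ch}(\pilam)$ under $W_{\mathfrak{g}}$: from \eqref{delg} a direct check shows $s \cdot \Delta_{\mathfrak{g}} = \operatorname{sgn}(s)\, \Delta_{\mathfrak{g}}$, while $\operatorname{ch}(\pilam)$ is $W_{\mathfrak{g}}$-symmetric. The space of $W_{\mathfrak{g}}$-antisymmetric trigonometric polynomials on $T$ has an orthogonal basis consisting of the alternating sums
\[ J_\nu \;:=\; \sum_{s \in W_{\mathfrak{g}}} \operatorname{sgn}(s)\, \mathrm{e}^{s \cdot \nu},\]
indexed by the strictly dominant integral weights $\nu$ (one verifies $J_\nu \equiv 0$ unless $\nu$ is regular, and $\|J_\nu\|^{2} = |W_{\mathfrak{g}}|$ with respect to the inner product above). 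Consequently one may expand
\[ \Delta_{\mathfrak{g}} \cdot \operatorname{ch}(\pilam) \;=\; \sum_{\mubar \text{ dominant}} c_{\mubar}\, J_{\mubar + \rho_{\mathfrak{g}}},\]
with integer coefficients $c_{\mubar}$, and orthonormality forces $\sum_{\mubar} c_{\mubar}^{2} = 1$, so exactly one coefficient is $\pm 1$ and all others vanish.

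To pin down the surviving term I would invoke the theorem of the highest weight: the weights of $\pilam$ are of the form $\lambar - \sum_{\alpha \in \Phi_{\mathfrak{g}}^{+}} n_\alpha \alpha$ with $n_\alpha \in \mathbb{Z}_{\geq 0}$, and $\lambar$ itself occurs with multiplicity one. Expanding $\Delta_{\mathfrak{g}} = \mathrm{e}^{\rho_{\mathfrak{g}}} \prod_{\alpha \in \Phi_{\mathfrak{g}}^{+}} (1 - \mathrm{e}^{-\alpha})$, the unique maximal exponential (in the dominance order) appearing in the product $\Delta_{\mathfrak{g}} \cdot \operatorname{ch}(\pilam)$ is $\mathrm{e}^{\lambar + \rho_{\mathfrak{g}}}$, with coefficient $+1$; on the right-hand side this monomial can only come from $J_{\mubar + \rho_{\mathfrak{g}}}$ with $\mubar = \lambar$. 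Hence $c_{\lambar} = 1$ and all other $c_{\mubar}$ vanish, giving the formula in the uniform form stated for each of the classical families in \eqref{pos root}--\eqref{rho}. The main technical obstacle is really the preliminary structural statement that the $J_\nu$ with $\nu$ strictly dominant form an orthogonal basis for the antisymmetric Fourier polynomials on $T$; once this is in place, the remainder of the argument reduces to highest-weight bookkeeping, and nothing in the reasoning is sensitive to which of $\gln$, $\spn$, $\SO(2n)$, or $\SO(2n+1)$ we are working with.
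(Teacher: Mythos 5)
Your outline is correct, but it is worth noting that the paper does not prove this statement at all: Theorem~\ref{weyl char formula} is quoted as standard background (the paper defers to \cite{GW} for proofs of the preliminary material), so there is no in-paper argument to compare against. What you have written is the classical analytic proof via the compact form: Weyl integration plus Schur orthogonality gives $\|\Delta_{\mathfrak{g}}\cdot\operatorname{ch}(\pilam)\|^2=|W_{\mathfrak{g}}|$, the alternating sums $J_{\nu}$ over regular dominant $\nu$ form an orthogonal system of the same norm spanning the antisymmetric Fourier polynomials, so an integrality argument forces exactly one coefficient $\pm1$, and the highest-weight term $\mathrm{e}^{\lambar+\rho_{\mathfrak{g}}}$ pins it down to $c_{\lambar}=+1$. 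This is sound; the genuinely load-bearing steps are the ones you flag (that $J_{\nu}$ vanishes when $\nu$ is singular and that $W_{\mathfrak{g}}$ acts freely on regular weights, so $\|J_{\nu}\|^2=|W_{\mathfrak{g}}|$), together with the unitarian trick identifying irreducible holomorphic representations of $G$ with irreducible representations of the compact form. One small point deserving care for the groups in this paper: for $\SO(2n)$ the group $W_{\mathfrak{g}}=N(T)/T\cong(\Z/2\Z)^{n-1}\rtimes\mathfrak{S}_n$ appearing in the integration formula is a proper subgroup of the full signed permutation group, and the same $W_{\mathfrak{g}}$ must be used both in the integration formula and in the definition of $J_{\nu}$; your argument is consistent with this but the verification that $J_{\lambar+\rho_{\mathfrak{g}}}$ is nonzero (i.e.\ that $\lambar+\rho_{\mathfrak{g}}$ is regular for this smaller group when $\lambda_n=0$) should be made explicit there. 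An alternative, purely algebraic route (Freudenthal/Casimir or the BGG-type argument via Verma modules) avoids the compact form entirely, but for the classical groups treated here your approach is the most economical.
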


 Now, we state the Weyl dimension formula for all groups. For proofs and further details, see \cite[Subsection~7.1.2]{GW} and \cite[Chapter 24]{FH}.
\begin{theorem}(Weyl Dimension Formula) \label{weyl dim}
	For a sequence of integers $\lambar=(\lambda_1 \geq \lambda_2 \geq \cdots \geq \lambda_n \geq 0)$ and an irreducible representation $\pilam$ with the highest weight $\lambar$, the dimension of $\pilam$ is given by the following formula.
	\[
		\begin{aligned}
			& \prod_{1 \leq i <j \leq n} \frac{(\lambda_i + j -\lambda_j -i)}{ j-i},   && \hspace*{-2.5mm} \text{ for } \gln, \vspace{1mm} \\
			&	\prod_{1 \leq i <j \leq n} \frac{(\lambda_i + n-i)^2 -(\lambda_j +n-j)^2}{ (n-i)^2 - (n-j)^2},  && \hspace*{-2.5mm} \text{ for } \SO(2n), \vspace{1mm}  \\
			&	\prod_{1 \leq i <j \leq n} \frac{(\lambda_i + n+1-i)^2 -(\lambda_j + n+1-j)^2}{ (n+1-i)^2 - (n+1-j)^2}   \prod_{1 \leq i  \leq n} \frac{\lambda_i + n+1-i}{ n+1-i}, && \hspace*{-2.5mm} \text{ for } \Sp(2n), \\
			&	\prod_{1 \leq i <j \leq n} \frac{(\lambda_i + n+\frac{1}{2}-i)^2 -(\lambda_j + n+\frac{1}{2}-j)^2}{ (n+\frac{1}{2}-i)^2 - (n+\frac{1}{2}-j)^2}   \prod_{1 \leq i  \leq n} \frac{\lambda_i + n+\frac{1}{2}-i}{ n+\frac{1}{2}-i}, && \hspace*{-2.5mm} \text{ for } \SO(2n+1).
		\end{aligned}
	\]
\end{theorem}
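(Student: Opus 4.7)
My plan is to derive each of the four dimension formulae directly from the Weyl Character Formula (Theorem~\ref{weyl char formula}) by specializing the formal character at the one-parameter family of torus points $e^{\varepsilon_i}=q^{\rho_i}$ (with $\rho_i$ as in~(\ref{rho})) and sending $q\to 1$. Since $\dim\pilam$ is $\operatorname{ch}(\pilam)$ evaluated at the identity of $T_G$, the Weyl formula produces a $0/0$ indeterminate form; the substitution above resolves it via the elementary asymptotic $q^a-q^b\sim(a-b)(q-1)$ as $q\to 1$.

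For $\gln$ Theorem~\ref{weyl char formula} reduces to the classical Jacobi bialternant $\operatorname{ch}(\pilam)=\det[x_i^{\lambda_j+n-j}]/\det[x_i^{n-j}]$ with $x_i=e^{\varepsilon_i}$. After setting $x_i=q^{n-i}$ both determinants become Vandermonde determinants in $q^{\lambda_j+n-j}$ and $q^{n-j}$ respectively, and applying the above asymptotic factorwise yields exactly $\prod_{i<j}(\lambda_i-\lambda_j+j-i)/(j-i)$.

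For types $B_n$, $C_n$, and $D_n$, the extra $(\Z/2\Z)^{\bullet}$ summands in $W_{\mathfrak{g}}$ antisymmetrize $x_i\mapsto x_i^{-1}$ coordinatewise, so the character becomes a bialternant of the form $\det[x_i^{\ell_j}\pm x_i^{-\ell_j}]$ with $\ell_j=\lambda_j+\rho_j$. Equivalently, the Weyl denominator $\Delta_{\mathfrak{g}}$ from~(\ref{delg}) factors through~(\ref{pos root}) as $\prod_{\alpha>0}(e^{\alpha/2}-e^{-\alpha/2})$, so long roots $\varepsilon_i\pm\varepsilon_j$ contribute pairs of the form $(x_i-x_j)(x_ix_j-1)$ while short roots $\varepsilon_i$ (type $B_n$) or $2\varepsilon_i$ (type $C_n$) contribute single-index factors $(x_i-1)$ or $(x_i^2-1)$. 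Under $x_i=q^{\rho_i}$ each long-root pair becomes $(\rho_i^2-\rho_j^2)(q-1)^2$ up to unit factors, and each short-root factor becomes a linear $\rho_i$ (resp.\ $2\rho_i$) multiple of $q-1$. The numerator $\sum_{s\in W_{\mathfrak{g}}}\operatorname{sgn}(s)e^{s(\lambar+\rho_{\mathfrak{g}})}$ factors identically with $\rho_i$ replaced by $\ell_i=\lambda_i+\rho_i$. Cancelling equal powers of $q-1$ between numerator and denominator and passing to the limit produces the displayed squared-difference products $(\lambda_i+\rho_i)^2-(\lambda_j+\rho_j)^2$ over $\rho_i^2-\rho_j^2$, together with the single-index corrections $\prod_i(\lambda_i+\rho_i)/\rho_i$ which arise only in types $B_n$ and $C_n$.

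The most delicate point is the bookkeeping in the last step: the signs in the antisymmetric bialternants differ slightly between $\Sp(2n)$, $\SO(2n+1)$, and $\SO(2n)$, and the $D_n$ case has only $(\Z/2\Z)^{n-1}$ in $W_{\mathfrak{g}}$, which is precisely what removes the short-root single-index factor from the final product. The hard part will therefore be to match the Weyl denominator factorization in each type correctly against the positive root list~(\ref{pos root}) and to verify that the powers of $q-1$ cancel uniformly, so that the limit $q\to 1$ delivers all four cases of Theorem~\ref{weyl dim} in a common framework.
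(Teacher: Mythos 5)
The paper gives no proof of Theorem~\ref{weyl dim}; it simply cites \cite[Subsection~7.1.2]{GW} and \cite[Chapter~24]{FH}, and your $q\to 1$ principal-specialization of the Weyl character formula is precisely the standard argument found in those references (equivalently, the product formula $\prod_{\alpha>0}\langle\lambar+\rho_{\mathfrak{g}},\alpha\rangle/\langle\rho_{\mathfrak{g}},\alpha\rangle$ evaluated against the positive-root lists in~(\ref{pos root})). Your outline is correct in all four types, so it matches the proof the paper is implicitly relying on.
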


\begin{proposition}\label{dim denom}
	For \(\rho_{\mathfrak{g}}=\sum_{i=1}^{n} \rho_i \varepsilon_i\), half the sum of positive roots, we have:
	
	$$
	\begin{aligned}
		\prod_{1 \leq i <j \leq n} (\rho_i - \rho_j)    &=  \prod_{j=1}^{n-1} (n-j)! && \text{ when } \ G=\gln,  \\
		\prod_{1 \leq i <j \leq n} (\rho_i^2 - \rho_j^2)    &= \frac{1}{2^{n-1}} \prod_{j=1}^{n-1} (2n-2j)!  && \text{ when } \ G=\SO(2n), \\
		\prod_{1 \leq i <j \leq n} (\rho_i^2 - \rho_j^2)   \prod_{1 \leq i  \leq n} \rho_i 
		&=  \frac{1}{2^n}  \prod_{j=1}^n (2n-2j+1)!  && \text{ when } \ G= \SO(2n+1),  \\
		\prod_{1 \leq i <j \leq n} (\rho_i^2 - \rho_j^2)   \prod_{1 \leq i  \leq n} \rho_i 
		&=  \prod_{j=1}^n (2n-2j+1)!  && \text{ when } \ G= \Sp(2n). 
	\end{aligned}
	$$
\end{proposition}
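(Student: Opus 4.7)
The plan is a direct computation in each of the four cases, obtained by substituting the explicit values of $\rho_i$ from \eqref{rho}. The key uniform observation is that the sequence $(\rho_i)$ is in every case arithmetic with common difference $-1$, so $\rho_i - \rho_j = j - i$. Consequently,
\[
\prod_{1 \le i < j \le n}(\rho_i - \rho_j) \;=\; \prod_{1 \le i < j \le n}(j - i) \;=\; \prod_{k=1}^{n-1} k! \;=\; \prod_{j=1}^{n-1}(n-j)!,
\]
by the standard Vandermonde evaluation at the nodes $0, 1, \ldots, n-1$ (each $k \in \{1, \ldots, n-1\}$ appears $n-k$ times on the left, matching the expansion $\prod_{k=1}^{n-1}k! = \prod_{k=1}^{n-1}k^{n-k}$). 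This settles the $\gln$ case and isolates the factor $\prod_{k=1}^{n-1}k!$ for later use.

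For the three remaining groups I would factor
\[
\rho_i^2 - \rho_j^2 \;=\; (\rho_i - \rho_j)(\rho_i + \rho_j) \;=\; (j - i)\,(\rho_i + \rho_j),
\]
so the Vandermonde in the $\rho_i^2$ splits as $\prod_{k=1}^{n-1}k! \cdot \prod_{1 \le i < j \le n}(\rho_i + \rho_j)$, and the task reduces to evaluating the sum-product. Writing $\rho_i + \rho_j = 2c - i - j$ with $c = n,\, n+1,\, n+\tfrac{1}{2}$ respectively for $\SO(2n),\, \Sp(2n),\, \SO(2n+1)$, and fixing $i$ while letting $j$ run from $i+1$ to $n$, each inner product becomes a product of consecutive (half-)integers and telescopes to a ratio of factorial (or Gamma-) values of the form $(2c-2i-1)!/(2c-n-i)!$. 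Multiplying by $\prod_i \rho_i$ in the two cases where it is present and combining with the previously extracted $\prod_{k=1}^{n-1}k!$, routine cancellation together with the Legendre duplication identity $(2k)! = 2^k\, k!\,(2k-1)!!$ consolidates the whole expression into the announced product of even or odd factorials, with the prefactors $1/2^{n-1}$, $1$, $1/2^n$ for $\SO(2n), \Sp(2n), \SO(2n+1)$ emerging naturally from the cancellation (and, in the $\SO(2n+1)$ case, also from the factor $\prod_i \rho_i = 2^{-n}\prod_{k=1}^n(2k-1)$).

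The main obstacle is purely bookkeeping: the three computations are structurally parallel but differ in small respects --- integer $\rho_i$ in $\SO(2n)$ and $\Sp(2n)$ versus half-integer $\rho_i$ in $\SO(2n+1)$, presence or absence of the $\prod_i \rho_i$ factor, and the varying shift $c$ --- so the real care lies in tracking factorial indices and powers of $2$ through the duplication step rather than in any conceptual point. I would organize the proof by first carrying out the $\SO(2n)$ case in full (where the identity reduces, after cancellation, to the elementary $2(n-1)(2n-3)! = (2n-2)!$), and then deriving the $\Sp(2n)$ and $\SO(2n+1)$ cases by adjusting the shift $c$ in the sum-product and incorporating the extra factor $\prod_i \rho_i$.
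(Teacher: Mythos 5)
Your computation is correct and is exactly the direct verification the paper intends: the paper states only ``We omit the straightforward proof,'' and substituting the explicit $\rho_i$ from \eqref{rho}, factoring $\rho_i^2-\rho_j^2=(j-i)(\rho_i+\rho_j)$, and telescoping the resulting products of consecutive (half-)integers is the straightforward route. The one harmless slip is an off-by-one in your intermediate formula: for fixed $i$ the inner product $\prod_{j=i+1}^{n}(2c-i-j)$ equals $(2c-2i-1)!/(2c-n-i-1)!$ rather than $(2c-2i-1)!/(2c-n-i)!$, but this does not affect the final bookkeeping or the stated prefactors.
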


We omit the straightforward proof.

\begin{defn}\label{def det}
	Recall that the determinant for an $n \times n$ matrix $A=(a_{ij})$ is:
	\begin{equation*}
		\operatorname{det}(A)= \sum_{\sigma \in \Sn} \operatorname{sgn}(\sigma) \prod_{j=1}^n a_{\sigma(j),j}.
	\end{equation*}
\end{defn}

\begin{proposition}\label{dim num}
	\begin{align*}
		\det \left[x^{n-j}_i\right] &=	\prod_{1 \leq i < j \leq n} (x_i -x_j),  \\
		\det \left[x^{2n-2j}_i \right]  &=	\prod_{1 \leq i < j \leq n} (x_i^2 -x_j^2), \\
		\det \left[x^{2n-2j+1}_i \right] &=	\prod_{1 \leq i < j \leq n} (x_i^2 -x_j^2)  \prod_{1 \leq i  \leq n} x_i.   
	\end{align*}
\end{proposition}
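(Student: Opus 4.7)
The plan is to reduce all three identities to the classical Vandermonde determinant, which is the first identity.

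For the first identity, I would observe that the Leibniz expansion shows $\det[x_i^{n-j}]$ is a polynomial in $x_1,\dots,x_n$ of total degree $\binom{n}{2}$, since each summand $\operatorname{sgn}(\sigma)\prod_j x_{\sigma(j)}^{n-j}$ has degree $\sum_j (n-j) = \binom{n}{2}$. Next, for any $i<j$, specializing $x_i = x_j$ makes rows $i$ and $j$ of the matrix equal, so the determinant vanishes; hence it is divisible by $x_i - x_j$ for all $i<j$. Since the factors $x_i - x_j$ are pairwise coprime in $\C[x_1,\dots,x_n]$, the product $\prod_{i<j}(x_i - x_j)$, which also has degree $\binom{n}{2}$, divides the determinant, and they agree up to a scalar. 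Comparing the coefficient of the monomial $x_1^{n-1}x_2^{n-2}\cdots x_n^{0}$ on both sides (on the left it comes only from the identity permutation and equals $1$; on the right from expanding the product by picking $x_i$ from each factor $(x_i - x_j)$ with $i<j$, also yielding $1$) fixes the scalar to be $1$.

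For the second identity, I substitute $y_i = x_i^2$ into the first identity to obtain
\[
\det\left[y_i^{n-j}\right] = \prod_{1 \leq i < j \leq n}(y_i - y_j),
\]
which is exactly $\det\left[x_i^{2n-2j}\right] = \prod_{1 \leq i < j \leq n}(x_i^2 - x_j^2)$.

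For the third identity, each entry $x_i^{2n-2j+1}$ in row $i$ has an extra factor of $x_i$ compared to $x_i^{2n-2j}$. Factoring $x_i$ out of the $i$-th row for each $i$, the determinant becomes
\[
\det\left[x_i^{2n-2j+1}\right] = \left(\prod_{i=1}^n x_i\right)\det\left[x_i^{2n-2j}\right],
\]
and applying the second identity gives the desired expression $\prod_{1 \leq i<j \leq n}(x_i^2 - x_j^2)\prod_{1 \leq i \leq n} x_i$.

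There is no real obstacle here; the only step requiring any care is the scalar normalization in the Vandermonde case, which is handled by comparing a single monomial coefficient as above. Steps two and three are purely algebraic substitutions once the first identity is in place.
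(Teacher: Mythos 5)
Your proposal is correct and follows essentially the same route as the paper: the first identity is the classical Vandermonde determinant (which you prove in full via the degree--divisibility--normalization argument, where the paper simply cites it), the second follows by the substitution $y_i = x_i^2$, and the third by factoring $x_i$ out of each row and reducing to the second. All three steps check out, including the exponent bookkeeping ($2n-2j+1$ reduces to $2n-2j$ after extracting one $x_i$ per row).
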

\begin{proof}
	The first identity arises from the determinant of the Vandermonde matrix. The second matrix corresponds to a Vandermonde-type determinant where the values of $x_i$ are substituted with $x_i^2$. A similar procedure can be applied to the third matrix.
\end{proof}

\section{Weyl Dimension Formula as Determinant}
 The following lemma establishes a relationship between determinants of matrices with entries containing factorial terms and matrices with entries as binomial coefficients. This lemma plays a crucial role in formulating and proving the main theorems presented in this paper.

\begin{lemma}\label{det identity}
	We have the following three equalities of determinants of $n \times n$ matrices:
	\begin{align*}
		\det \left[\frac{x_i^{n-j}}{(n-j)!}\right] &= \det \left[\binom{x_i}{n-j}\right],  \\ 
		\det \left[\frac{x_i^{2n-2j+1}}{(2n-2j+1)!}\right] &= \det \left[\binom{x_i+n-j}{2n-2j+1}\right], \\
		 \det \left[\frac{x_i^{2n-2j}}{(2n-2j)!}\right] &= \det \left[\binom{x_i+n-j-\frac{1}{2}}{2n-2j}\right].
	\end{align*}
\end{lemma}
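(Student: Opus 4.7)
The plan is to view each entry of the right-hand side matrix as a polynomial in $x_i$ of controlled degree and parity, and then perform column operations to reduce the right-hand side matrix to the left-hand side. Equivalently, I will factor the right-hand side matrix as the left-hand side matrix times a unitriangular matrix $U$, so that $\det U = 1$ and the determinants coincide.

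The first step is to work out the explicit factorizations of the binomial coefficients appearing in each column. For the first identity, $\binom{x}{n-j}$ is a polynomial in $x$ of degree $n-j$ with leading coefficient $\tfrac{1}{(n-j)!}$. For the second identity, writing $k = n-j$, one checks directly that
\[
\binom{x+k}{2k+1} \;=\; \frac{(x+k)(x+k-1)\cdots(x-k)}{(2k+1)!} \;=\; \frac{x\prod_{l=1}^{k}(x^2-l^2)}{(2k+1)!},
\]
which is an \emph{odd} polynomial in $x$ of degree $2k+1$ with leading coefficient $\tfrac{1}{(2n-2j+1)!}$. For the third identity, again writing $k = n-j$, pairing the $2k$ symmetric factors gives
\[
\binom{x+k-\tfrac{1}{2}}{2k} \;=\; \frac{(x+k-\tfrac{1}{2})(x+k-\tfrac{3}{2})\cdots(x-k+\tfrac{1}{2})}{(2k)!} \;=\; \frac{\prod_{l=1}^{k}\bigl(x^2-(l-\tfrac{1}{2})^2\bigr)}{(2k)!},
\]
which is an \emph{even} polynomial in $x$ of degree $2k$ with leading coefficient $\tfrac{1}{(2n-2j)!}$.

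The second step is the column reduction. For fixed $i$, the key point is that the parity and degree structure established above forces the $(i,j)$-entry of the right-hand matrix to lie in the linear span of the $(i,k)$-entries of the left-hand matrix for $k \geq j$: arbitrary powers for the first identity, only odd powers for the second, and only even powers for the third. Consequently there exist scalars $U_{kj}$ (independent of $i$) with $U_{kj} = 0$ for $k < j$ such that the right-hand matrix equals the left-hand matrix times $U$. Since the leading coefficient of the polynomial in column $j$ is exactly the reciprocal factorial appearing on the left, $U_{jj} = 1$ in all three cases, so $U$ is upper unitriangular and $\det U = 1$.

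The final step is simply to take determinants of both sides. The main (and only substantive) obstacle is the parity bookkeeping in Step 1: recognising that the specific shifts $n-j$ and $n-j-\tfrac{1}{2}$ on the right are precisely the values that make the binomial coefficient an odd, respectively even, polynomial so that the column span of the left-hand matrix (which omits all even, respectively odd, powers) is large enough to accommodate it. Once those factorisations are in hand, the three identities follow by identical unitriangular change-of-basis arguments.
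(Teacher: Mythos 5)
Your proof is correct and follows essentially the same route as the paper: both arguments expand each binomial entry as a polynomial of the appropriate degree and parity with leading coefficient the reciprocal factorial, and then conclude by an upper unitriangular column reduction. Your version is in fact slightly more explicit than the paper's, since you exhibit the product factorizations $x\prod_{l=1}^{k}(x^2-l^2)/(2k+1)!$ and $\prod_{l=1}^{k}\bigl(x^2-(l-\tfrac{1}{2})^2\bigr)/(2k)!$ that justify the odd/even parity claims, where the paper only asserts the corresponding expansions with unspecified constants.
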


\begin{proof}
	 The first equality is established by applying column operations to the matrix defined by the $(i, j)$-th entry equal to $\frac{x_{i}^{n-j}}{(n-j)!}$. For $1\leq j \leq n-1$, one can write
	$$\binom{x}{n-j}=\frac{x^{n-j}}{(n-j)!} + \sum_{j+1\leq k \leq n-1} a_{j,k} \frac{x^{n-k}}{(n-k)!},$$
	where $a_{j,k}$ is a constant. We perform a column operation on the $j$-th column, where $1\leq j \leq n-1$, given by
	$$C_j^{'}=C_j + \sum_{j+1\leq k \leq n-1} a_{j,k} C_k,$$
	where $C_j$ and $C_j^{'}$ denote the old and new $j$-th column, respectively. Following this operation, the $(i, j)$-th entry transforms into $\binom{x_i}{n-j}$. This establishes the first equality in the lemma. 
	
	For the proof of the other two equalities, we need the following two identities, respectively, where $1\leq j \leq n$:
	\begin{align*}
		\binom{x+n-j}{2n-2j+1} &=\frac{x^{2n-2j+1}}{(2n-2j+1)!} + \sum_{j+1\leq k \leq n} b_{j,k} \frac{x^{2n-2k+1}}{(2n-2k+1)!}, \\
		\binom{x+n-j-\frac{1}{2}}{2n-2j} &=\frac{x^{2n-2j}}{(2n-2j)!} + \sum_{j+1\leq k \leq n} c_{j,k} \frac{x^{2n-2k}}{(2n-2k)!}. 
	\end{align*}
	Here, again, $b_{j,k}$ and $c_{j,k}$ are constants.
\end{proof}
The above lemma is necessary for deriving the following theorem, which expresses the dimension of an irreducible representation with highest weight $\lambar=(\lambda_1 \geq \lambda_2 \geq \cdots \geq \lambda_n \geq 0)$ as a determinant of a certain combinatorial $n \times n$ matrix.

\begin{theorem}(New formulation of Weyl Dimension)\label{new weyl dim}
	$$ \dim(\pilam) =
	\begin{cases}
		\begin{aligned}
			&	\det \left[\binom{\lambda_i +n -i}{n-j}\right], && \text{ for } \quad  \gln,  \\ 
			& 	\det \left[\binom{\lambda_i-i + 2n -j+1}{2n-2j+1}\right],	 &&	 \text{ for } \quad \Sp(2n), \\
			2^n &\det \left[\binom{\lambda_i-i + 2n -j+\frac{1}{2}}{2n-2j+1}\right],	  &&  \text{ for } \quad \SO(2n+1), \\
			2^{n-1} &	\det \left[\binom{\lambda_i-i + 2n -j-\frac{1}{2}}{2n-2j}\right], &&   \text{ for } \quad \SO(2n).
		\end{aligned}
	\end{cases}
	$$
\end{theorem}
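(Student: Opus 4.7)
The plan is to derive all four determinantal formulae directly from the classical Weyl dimension formula (Theorem~\ref{weyl dim}) by recognising its numerator as a Vandermonde-type determinant (Proposition~\ref{dim num}) and its denominator as a product of factorials (Proposition~\ref{dim denom}), and then converting the resulting monomial/factorial determinant into a binomial determinant by applying Lemma~\ref{det identity}.

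Concretely, for each classical group $G$ set $y_i := \lambda_i + \rho_i$, where $\rho_i$ is as in~\eqref{rho}. By Proposition~\ref{dim num} applied with $x_i = y_i$, the numerator in Theorem~\ref{weyl dim} equals $\det[y_i^{n-j}]$ for $\gln$, $\det[y_i^{2n-2j}]$ for $\SO(2n)$, and $\det[y_i^{2n-2j+1}]$ for $\spn$ and $\SO(2n+1)$. By Proposition~\ref{dim denom}, the denominator is a product of factorials, namely $\prod_{j=1}^{n-1}(n-j)!$, $\frac{1}{2^{n-1}}\prod_{j=1}^{n-1}(2n-2j)!$, $\prod_{j=1}^{n}(2n-2j+1)!$, and $\frac{1}{2^{n}}\prod_{j=1}^{n}(2n-2j+1)!$ respectively. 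The $2$-power prefactors $2^{n-1}$ and $2^{n}$ appearing in the $\SO$ cases of Theorem~\ref{new weyl dim} are exactly what arises when these denominators are moved to the dimension side.

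The key observation is that the $j$-th factorial in the denominator matches precisely the exponent in the $j$-th column of the numerator determinant, so the factorials can be absorbed column-by-column into the determinant. For $\gln$ and $\SO(2n)$ the denominator product formally stops at $j=n-1$, but the ``missing'' factor at $j=n$ is $0!=1$, so extending the product to $j=1,\dots,n$ is harmless and produces a well-defined normalisation of every column. After this step, each case has been reduced to a determinant of the form $\det[y_i^{k_j}/k_j!]$ with $k_j \in \{n-j,\, 2n-2j,\, 2n-2j+1\}$, to which Lemma~\ref{det identity} applies directly and yields a determinant of binomial coefficients.

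Finally, substituting $y_i = \lambda_i + \rho_i$ and simplifying the binomial arguments produces the matrices in the statement: for example $y_i + n - j = \lambda_i - i + 2n - j + 1$ in the symplectic case, $y_i + n - j = \lambda_i - i + 2n - j + \tfrac{1}{2}$ for $\SO(2n+1)$, and $y_i + n - j - \tfrac{1}{2} = \lambda_i - i + 2n - j - \tfrac{1}{2}$ for $\SO(2n)$. The argument is essentially bookkeeping once Propositions~\ref{dim denom}, \ref{dim num} and Lemma~\ref{det identity} are available; the main obstacle is organisational, namely tracking the $2$-power denominators in the orthogonal cases and the slight asymmetry between denominator products running to $j=n$ versus $j=n-1$ across the four families.
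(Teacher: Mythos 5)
Your proposal is correct and follows essentially the same route as the paper: rewrite the classical Weyl dimension formula via Propositions~\ref{dim num} and~\ref{dim denom}, absorb the factorial denominators column-by-column into the determinant, and convert to binomial entries with Lemma~\ref{det identity}. The only difference is that the paper carries out the computation explicitly only for $\gln$ and declares the other cases analogous, whereas you track the remaining three cases (including the $2$-power prefactors for the orthogonal groups) in detail, and your bookkeeping there checks out.
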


\begin{proof}
	We only prove this for the general linear group; the proofs for other groups follow similarly. 
	Using Proposition~\ref{dim num} and Proposition~\ref{dim denom}, Theorem~(\ref{weyl dim}) gives the following expression for the Weyl dimension formula for $\gln$ 
	\[\prod_{1 \leq i <j \leq n} \frac{(\lambda_i + j -\lambda_j -i)}{ j-i}=\frac{\prod_{1 \leq i <j \leq n} \left[(\lambda_i + \rho_i) -(\lambda_j + \rho_j)\right] }{\prod_{1 \leq i <j \leq n} \left( \rho_i - \rho_j \right) } = \frac{\det \left[\left(\lambda_i + \rho_i \right)^{n-j}\right]}{\prod_{j=1}^{n-1} (n-j)!}.\]
	Now we insert $(n-j)!$ inside the determinant in $j$-th column and then apply Lemma~\ref{det identity}.
	\[\frac{\det \left[\left(\lambda_i + \rho_i \right)^{n-j}\right]}{\prod_{j=1}^{n-1} (n-j)!} = \det \left[\frac{\left(\lambda_i + \rho_i \right)^{n-j}}{(n-j)!}\right] = \det \left[\binom{\lambda_i +n -i}{n-j}\right].\]
	This completes the proof of Theorem for the general linear groups. Similar arguments can be given for other classical group.
\end{proof}

\section{Partition Function}\label{sec partition}
Before going into the proof of Theorem~\ref{general gl}-\ref{general ortho}, we derive the necessary partition function for each pair in this section. Note that, we follow the same notation and definitions as in \cite[Subsection~8.2.1]{GW}. We use the following notations as defined earlier:
\[\begin{array}{rl}
	\begin{aligned}
		\glnm \, &:=\\
		\spnm \, &:=\\
		\sopq \, &:=
	\end{aligned} & \begin{aligned}
	\left( \right. \GL(m) &\subset \GL(n) \left. \right),\\
	\left( \right. \Sp(2m) &\subset \Sp(2n) \left. \right),\\
	\left( \right. \SO(q) &\subset \SO(p) \left. \right).
	\end{aligned}
\end{array}\]
Here $p=2n \text{ or } 2n+1 \text{ and } q=2m \text{ or } 2m+1$.
Let $\overline{\alpha}$ be the element of $\mathfrak{t}_{\mathfrak{h}}^{*}$ obtained by restricting $\alpha$ from the Lie algebra $\mathfrak{t}_{\mathfrak{g}}$ to the Lie algebra $\mathfrak{t}_{\mathfrak{h}}$.
Define $\overline{\Phi_{\mathfrak{g}}^{+}}$ as $\left\{\bar{\alpha}: \alpha \in \Phi_{\mathfrak{g}}^{+}\right\}$. Since $\overline{\varepsilon_{m+i}}=0$, for $i\geq 1$ we have $\overline{\Phi_{\mathfrak{g}}^{+}}$ as follows:

\[\overline{\Phi_{\mathfrak{g}}^{+}} = \begin{cases}
	\begin{aligned}
		&	\left\{\varepsilon_i - \varepsilon_j: 1 \leq i<j \leq m \right\} \cup 	\left\{\hspace*{3mm} \varepsilon_i \hspace*{4mm} : 1 \leq i \leq m \right\}, && \text{ for } \, \glnm, \\
		&	\left\{\varepsilon_i \pm \varepsilon_j: 1 \leq i<j \leq m \right\} \cup 	\left\{\varepsilon_i, 2 \varepsilon_i : 1 \leq i \leq m \right\}, \quad && \text{ for } \, \spnm, \\
		&	\left\{\varepsilon_i \pm \varepsilon_j: 1 \leq i<j \leq m \right\} \cup 	\left\{\hspace*{3mm} \varepsilon_i \hspace*{4mm} : 1 \leq i \leq m \right\},  && \text{ for } \, \soodod, \soodev, \\
		&	\left\{\varepsilon_i \pm \varepsilon_j: 1 \leq i<j \leq m \right\} \cup 	\left\{\hspace*{3mm} \varepsilon_i \hspace*{4mm} : 1 \leq i \leq m \right\},  && \text{ for } \, \soevod, \soevev. 
	\end{aligned}
\end{cases}\]

Note that $\Phi_{\mathfrak{h}}^{+}$ is a subset of $\overline{\Phi_{\mathfrak{g}}^{+}}$. For each $\beta \in \overline{\Phi_{\mathfrak{g}}^{+}}$, let $S_\beta$ represent $\left\{\alpha \in \Phi_{\mathfrak{g}}^{+}: \bar{\alpha}=\beta\right\}$. Now, consider the following definitions:
\begin{equation*}\label{sigma set}
	\Sigma_0=\left\{\beta: \beta \in \Phi_{\mathfrak{h}}^{+} \text { and }\left|S_\beta\right|>1\right\}, \quad \Sigma_1=\overline{\Phi_{\mathfrak{g}}^{+}} \backslash \Phi_{\mathfrak{h}}^{+} .
\end{equation*}
Consider the set $\Sigma=\Sigma_0 \cup \Sigma_1$. For each $\beta \in \Sigma$, we define the multiplicity $m_\beta$ as follows:

\begin{equation*}
	m_\beta=  \begin{cases}
		\left|S_\beta\right|, & \text { if } \beta \notin \Phi_{\mathfrak{h}}^{+},\\
		\left|S_\beta\right|-1, & \text { if } \beta \in \Phi_{\mathfrak{h}}^{+} .
	\end{cases}
\end{equation*}
Define the partition function $\wp_{\Sigma}$ on $\mathfrak{t}_{\mathfrak{h}}^*$ by the formal identity
\begin{equation}\label{def partition}
	\frac{1}{R_{\Sigma}}:=\prod_{\beta \in \Sigma}\left(1-\mathrm{e}^{-\beta}\right)^{-m_\beta}=\sum_{\xi} \wp_{\Sigma}(\xi) \mathrm{e}^{-\xi}.
\end{equation}
$\wp_{\Sigma}(\xi)$ is the number of ways of writing
$
\xi=\sum_{\beta \in \Sigma} c_\beta \beta \quad\left(c_\beta \in \mathbb{N}\right)
$
where each $\beta$ that occurs is counted with multiplicity $m_\beta$.

We describe the set $\Sigma$ and the multiplicity $m_\beta$ of an element $\beta$ in $\Sigma$ for all pairs $H \subset G$.
\begin{proposition}\label{sigma and r}
	Consider all branching pairs $H \subset G$. Then 
	\[\Sigma=\{\varepsilon_i : 1\leq i \leq m\}, \, (r:=)m_{\varepsilon_i}=\begin{cases}
		n-m, & \text{ for } \, \glnm, \\
		2n-2m, & \text{ for } \, \spnm, \\
		2n+1-q, &  \text{ for } \, \soodq, q=2m \text{ or } 2m+1, \\
		2n-q, &  \text{ for } \, \soevq, \hspace{4mm} q=2m \text{ or } 2m+1.
	\end{cases}\]
\end{proposition}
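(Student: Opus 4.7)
The plan is a direct case-by-case verification using the explicit descriptions of $\Phi_{\mathfrak{g}}^{+}$, $\Phi_{\mathfrak{h}}^{+}$, and $\overline{\Phi_{\mathfrak{g}}^{+}}$ already recorded in \eqref{pos root} and in the displays preceding this proposition. Since $\overline{\varepsilon_{m+i}} = 0$ for $i \geq 1$, the restriction $\bar{\alpha}$ of any $\alpha \in \Phi_{\mathfrak{g}}^{+}$ is obtained by zeroing out the last $n-m$ coordinates, so classifying which $\alpha$ restrict to a given $\beta \in \overline{\Phi_{\mathfrak{g}}^{+}}$ becomes a purely combinatorial matter involving indices. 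I would therefore go through each type of element of $\overline{\Phi_{\mathfrak{g}}^{+}}$ (namely $\varepsilon_i - \varepsilon_j$, $\varepsilon_i + \varepsilon_j$, $\varepsilon_i$, and $2\varepsilon_i$, with the subset relevant to each pair) and count $|S_\beta|$ directly.

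The first substantive step is the observation that if $\beta \in \overline{\Phi_{\mathfrak{g}}^{+}} \cap \Phi_{\mathfrak{h}}^{+}$ is of the form $\varepsilon_i \pm \varepsilon_j$ with $i < j \leq m$, or $2\varepsilon_i$ with $i \leq m$ (in the symplectic case), then the only $\alpha \in \Phi_{\mathfrak{g}}^{+}$ with $\bar{\alpha} = \beta$ is $\beta$ itself, since both indices appearing in $\beta$ are already $\leq m$. Hence $|S_\beta| = 1$ for these $\beta$, so they do not contribute to $\Sigma_0$; they are excluded from $\Sigma_1$ by definition. Consequently, the only possible contributors to $\Sigma$ are the short roots $\varepsilon_i$ with $1 \leq i \leq m$, each of which is indeed in $\overline{\Phi_{\mathfrak{g}}^{+}}$ (it is the restriction of, e.g., $\varepsilon_i - \varepsilon_{m+1}$).

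It then remains to compute $|S_{\varepsilon_i}|$: a positive root $\alpha$ with $\bar{\alpha} = \varepsilon_i$ must be either (a) $\varepsilon_i - \varepsilon_j$ or $\varepsilon_i + \varepsilon_j$ with $m < j \leq n$ (when $\mathfrak{g}$ contains such roots), contributing $n-m$ or $2(n-m)$ roots respectively, or (b) $\varepsilon_i$ itself, contributing one further root exactly when $G = \Sp(2n)$ or $\SO(2n+1)$. (No root $2\varepsilon_a$ can restrict to $\varepsilon_i$, since $2\varepsilon_a$ restricts to $2\varepsilon_i$ or $0$.) This gives $|S_{\varepsilon_i}| = n-m$ for $\GL$, $2(n-m)$ for $\Sp$ and $\SO(2n)$, and $2(n-m)+1$ for $\SO(2n+1)$. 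Finally I would apply the rule $m_\beta = |S_\beta|$ or $|S_\beta|-1$, subtracting $1$ precisely when $\varepsilon_i \in \Phi_{\mathfrak{h}}^{+}$, which occurs exactly for $H = \Sp(2m)$ or $\SO(2m+1)$. Sorting these outcomes into the four parity subcases $\soodod, \soodev, \soevod, \soevev$ produces the stated formulas $2n+1-q$ (odd $p$) and $2n-q$ (even $p$). The only delicate point is the $\pm 1$ bookkeeping of whether the short root $\varepsilon_i$ lies in $\Phi_{\mathfrak{h}}^{+}$; apart from this, the verification is mechanical.
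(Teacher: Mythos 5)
Your overall strategy is the intended one (the paper itself omits this verification as routine): restrict each positive root of $\mathfrak{g}$ by killing $\varepsilon_{m+1},\dots,\varepsilon_n$, note that every $\beta\in\overline{\Phi_{\mathfrak{g}}^{+}}$ involving two indices $\le m$ (and likewise $2\varepsilon_i$) has the unique preimage $\beta$ itself and so contributes nothing to $\Sigma$, and then count the preimages of $\varepsilon_i$. Your counts $|S_{\varepsilon_i}|=n-m$ for $\GL(n)$, $2(n-m)$ for $\Sp(2n)$ and $\SO(2n)$, and $2(n-m)+1$ for $\SO(2n+1)$ are all correct.

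However, the step you yourself flag as the only delicate point is carried out incorrectly in the symplectic case. You claim that $\varepsilon_i\in\Phi_{\mathfrak{h}}^{+}$ ``occurs exactly for $H=\Sp(2m)$ or $\SO(2m+1)$.'' For $H=\Sp(2m)$ this is false: the positive roots of $\mathfrak{sp}(2m)$ are $\varepsilon_i\pm\varepsilon_j$ and the long roots $2\varepsilon_i$; the short weight $\varepsilon_i$ is \emph{not} a root. Hence for the pair $\spnm$ the element $\varepsilon_i$ lies in $\Sigma_1=\overline{\Phi_{\mathfrak{g}}^{+}}\setminus\Phi_{\mathfrak{h}}^{+}$, no subtraction occurs, and $m_{\varepsilon_i}=2(n-m)=2n-2m$ as the proposition asserts; your rule as written would instead give $2(n-m)-1$, contradicting the very statement being proved. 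The subtraction of $1$ happens only for $H=\SO(2m+1)$, i.e.\ only when $q=2m+1$, and this is exactly what separates the parity subcases: $2n+1-q$ equals $2(n-m)+1$ or $2(n-m)$ for $p=2n+1$ according as $q=2m$ or $2m+1$, and $2n-q$ equals $2(n-m)$ or $2(n-m)-1$ for $p=2n$. With that single correction your argument is complete and agrees with the proposition.
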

Observe that $m_{\varepsilon_i}$ remains constant for $1 \leq i \leq m$ for each group; therefore, we denote this constant by $r$. We do not provide a detailed proof for each pair, as it becomes evident through subsequent calculations.

\begin{lemma}\label{lemma partition}
	Consider the same $\Sigma$ and $r$ as in the preceding proposition. Let $\xibar \in \mathfrak{t}_{\mathfrak{h}}^*$ be defined as $\xibar= \sum_{j=1}^m \xi_j \varepsilon_j$. Then,
	\begin{equation*}\label{partition formula gl}
		\wp_{\Sigma}\left(\xibar \right)= \prod_{j=1}^{m}  \fracnom{\xi_j+r-1}{r-1}.
	\end{equation*}
\end{lemma}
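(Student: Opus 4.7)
The plan is to recognize Lemma~\ref{lemma partition} as a direct multivariable generating function calculation, leveraging the fact that Proposition~\ref{sigma and r} has reduced $\Sigma$ to the simple orthogonal set $\{\varepsilon_1,\ldots,\varepsilon_m\}$ with a common multiplicity $r$. Because the $\varepsilon_i$ are linearly independent in $\mathfrak{t}_\mathfrak{h}^*$, the product defining $1/R_\Sigma$ factors cleanly across the index $i$, so the whole problem collapses to a single-variable identity applied $m$ times.

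Concretely, I would begin with the classical negative-binomial expansion
\[
(1-x)^{-r} \;=\; \sum_{k=0}^{\infty} \binom{k+r-1}{r-1}\, x^{k},
\]
applied with $x = e^{-\varepsilon_i}$ for each $1 \leq i \leq m$. Substituting into the defining identity \eqref{def partition} gives
\[
\frac{1}{R_\Sigma} \;=\; \prod_{i=1}^{m}\bigl(1-e^{-\varepsilon_i}\bigr)^{-r}
\;=\; \prod_{i=1}^{m}\sum_{k_i=0}^{\infty}\binom{k_i+r-1}{r-1}\, e^{-k_i\varepsilon_i}.
\]
Expanding the product and using linear independence of $\varepsilon_1,\ldots,\varepsilon_m$ to collect terms, each exponential $e^{-\xibar}$ with $\xibar=\sum_{j=1}^m \xi_j \varepsilon_j$ appears with a unique coefficient, namely
\[
\prod_{j=1}^{m}\binom{\xi_j+r-1}{r-1}
\]
when all $\xi_j \geq 0$, and no exponential $e^{-\xibar}$ with any $\xi_j<0$ appears at all.

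Comparing with $\sum_\xi \wp_\Sigma(\xi)e^{-\xi}$ and reading off coefficients then yields $\wp_\Sigma(\xibar)=\prod_j \binom{\xi_j+r-1}{r-1}$ on the support $\xi_j\geq 0$, and $\wp_\Sigma(\xibar)=0$ otherwise. I would close the argument by observing that these two cases are uniformly captured by Definition~\ref{def fracnom}: since $\fracnom{\xi_j+r-1}{r-1}$ equals $\binom{\xi_j+r-1}{r-1}$ precisely when $\xi_j+r-1\geq r-1$, i.e.\ $\xi_j\geq 0$, and equals zero otherwise, the product $\prod_{j=1}^m \fracnom{\xi_j+r-1}{r-1}$ vanishes as soon as even one coordinate is negative. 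This matches the desired formula exactly.

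I do not anticipate any real obstacle, because Proposition~\ref{sigma and r} has already done the nontrivial work of identifying $\Sigma$ and the uniform multiplicity $r$; all that remains is a clean geometric-series expansion. The only fine point to flag is the boundary behavior, namely that negative coordinates produce zero rather than a genuine binomial coefficient, which is precisely the reason the $\fracnom{\cdot}{\cdot}$ notation was introduced in Definition~\ref{def fracnom}.
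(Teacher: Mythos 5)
Your proposal is correct and follows essentially the same route as the paper: factor $1/R_\Sigma$ over the linearly independent $\varepsilon_j$ using Proposition~\ref{sigma and r}, apply the negative-binomial expansion $(1-z)^{-r}=\sum_{l\geq 0}\binom{l+r-1}{r-1}z^l$ coordinatewise, read off coefficients, and absorb the vanishing on negative coordinates into the $\fracnom{\cdot}{\cdot}$ notation of Definition~\ref{def fracnom}. No gaps.
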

\begin{proof}
	Using the definition of partition function from equation~(\ref{def partition}) to the previously mentioned $\Sigma$ and $r$ in the preceding proposition, our partition function becomes:
	\begin{equation*}\label{part fun}
		\prod_{j=1}^{m}\left(1-\mathrm{e}^{-\varepsilon_j}\right)^{-r}=\sum_{\xibar \in \mathrm{t}_{\mathfrak{h}}^*} \wp_{\Sigma}(\xibar) \mathrm{e}^{-\xibar}.
	\end{equation*}
	So, $\wp_{\Sigma}(\xibar)$ represents the coefficients of $\mathrm{e}^{-\xibar}=\prod_{j=1}^{m} (\mathrm{e}^{-\varepsilon_j})^{\xi_j}$ in $\prod_{j=1}^{m}\left(1-\mathrm{e}^{-\varepsilon_j}\right)^{-r}$. Since the $\varepsilon_j$'s are linearly independent for $1 \leq j \leq m$, it suffices to determine the coefficients of $(\mathrm{e}^{-\varepsilon_j})^{\xi_j}$ in the expression of $\left(1-\mathrm{e}^{-\varepsilon_j}\right)^{-r}$ and then take the product over $j$. Specifically, we need to find the coefficients of $z^{\xi_j}$ in the expression of $\left(1-z\right)^{-r}$. Consider the following power series identity,
	\[\frac{1}{(1-z)^{r}}=\sum_{l \geq 0} \binom{l+r-1}{r-1} z^l.\]
	The coefficients mentioned above are some binomial coefficient. One can prove this identity by induction. Assuming it is true for $r-1$, differentiate both sides of the expression for value $r-1$ to obtain expressions for value $r$. Hence,
	if $\xi_j \geq 0$, the coefficients of $z^{\xi_j}$ are $\binom{\xi_j+r-1}{r-1}$; otherwise, the coefficient are zero. 
	\begin{equation*}\label{partition formula gl}
		\wp_{\Sigma}\left(\xibar \right)= \begin{cases}\prod_{j=1}^{m} \binom{\xi_j+r-1}{r-1}, & \text { if } \hspace*{2mm} \xi_j \geq 0 \hspace*{2mm} \text { for } \hspace*{2mm} 1\leq j \leq m, \\ \hspace*{1cm} 0, & \text { otherwise}.\end{cases}
	\end{equation*}
	Now using definition~\ref{def fracnom}, $\wp_{\Sigma}\left(\xibar \right)$ becomes:
	\begin{equation*}
		\wp_{\Sigma}\left(\xibar \right)= \prod_{j=1}^{m}  \fracnom{\xi_j+r-1}{r-1}.
	\end{equation*}
	This completes the proof of this Lemma.
\end{proof}

\section{Proof of Theorems \ref{general gl} - \ref{general ortho}}\label{sec all}\label{sec gen proof}
In this section, we prove the multiplicity formulae given in Theorem~\ref{general gl}-\ref{general ortho}. Our proof strategies follow the approach used in the restriction from $\Sp(2n)$ to $\Sp(2n-2)$ as in \cite[Subsection~8.3.4]{GW}.

 Recall the notation $R_{\mathfrak{h}}$ from Section~\ref{sec prelim}. The main idea of the proof is to use Weyl character formula (Theorem~\ref{weyl char formula}) for both $H$ and $G$ to get two expression for $R_{\mathfrak{h}} \cdot \pilam|_H$. By equating the coefficient of $\mathrm{e}^{\mubar}$ in these two expression, we find the multiplicity $m(\lambar, \mubar)$. We divide the proof into several parts.
\vspace{3mm}

\noindent \textbf{First Part}  We find the coefficient of $\operatorname{e}^{\mubar}$ in $R_{\mathfrak{h}} \cdot \pilam|_H$ using the Weyl character formula for $H$ in this part. 
Using the Weyl character formula for $H$ to $\Psi_{\mubar}$, equation~(\ref{restrict}) becomes:
\begin{equation}\label{weyl char psi gl}
	R_{\mathfrak{h}} \cdot \pilam|_H =\sum_{\mubar} \sum_{w \in W_{\mathfrak{h}}} m(\lambar, \mubar)  \operatorname{sgn}(w)  \mathrm{e}^{ w \cdot\left(\mubar + \rho_{\mathfrak{h}}\right)- \rho_{\mathfrak{h}}}.
\end{equation}
Consider two dominant weights $\mubar$ and $\mubard$ that appear in the sum. Since each weight is conjugate under the Weyl group $W_{\mathfrak{h}}$ to exactly one dominant weight, the equation \[ w \cdot\left(\mubar + \rho_{\mathfrak{h}}\right)- \rho_{\mathfrak{h}} = \mubard \] implies $\mubar' =\mubar$. Additionally, as $\mubar+\rho_{\mathfrak{h}}$ is regular, the equation $w \cdot\left(\mubar + \rho_{\mathfrak{h}}\right)- \rho_{\mathfrak{h}} = \mubar$ implies $w=1$. Consequently, the coefficient of $\operatorname{e}^{\mubar}$ in equation~(\ref{weyl char psi gl}) is $m(\lambar, \mubar)$. 
\vspace{3mm}

\noindent \textbf{Second Part} We find an expression of Weyl denominator at the end of this part, which is needed in the next part.
Consider the set $\Phi_{\mathfrak{g}}^{+}$ (see equation~(\ref{pos root})) of positive roots of $G$. Recall the notation $\overline{\alpha}$. Define $P$ and $Q$ to be subsets of $\Phi_{\mathfrak{g}}^{+}$ defined as follows:
\[ P = \{ \alpha \in \Phi_{\mathfrak{g}}^{+} : \overline{\alpha} = 0 \}, \quad  \quad Q = \{ \alpha \in \Phi_{\mathfrak{g}}^{+} : \overline{\alpha} \neq 0 \}. \]
So  $\Phi_{\mathfrak{g}}^{+}$ is the disjoint union of $P$ and $Q$. 
We define $\rho_s$, $R_s$ for a set $S \subset \Phi_{\mathfrak{g}}^{+}$  as follows analogously in equation~(\ref{rhog and rg}). 
\[ \rho_s=\frac{1}{2} \sum_{\alpha \in S} \alpha,  \quad \quad R_s= \prod_{\alpha \in S}\left(1-\mathrm{e}^{-\alpha}\right). \]
 Hence 
 \begin{equation}\label{rg prod}
 	R_{\mathfrak{g}}=R_p \cdot R_{q}, \quad \text{ and } \quad \mathrm{e}^{\rho_{\mathfrak{g}}} = \mathrm{e}^{\rho_p} \cdot \mathrm{e}^{\rho_q}.
 \end{equation}
 
Here is the exact description of $P$ for each pairs.
$$
P =  \begin{cases}
	\begin{aligned}
		& \left\{\varepsilon_i -   \varepsilon_j : m+1 \leq i<j \leq n\right\}, && \hspace*{-3.5mm} \text{ for } \  \glnm, \\
		& \left\{\varepsilon_i \pm   \varepsilon_j : m+1 \leq i<j \leq n\right\},  && \hspace*{-3.5mm} \text{ for }\, \, \soevod, \soevev, \\
		& \left\{\varepsilon_i \pm   \varepsilon_j : m+1 \leq i<j \leq n\right\} \cup \{2 \varepsilon_i : m+1 \leq i \leq n \}, && \hspace*{-3.5mm} \text{ for }  \ \spnm, \\
		&	\left\{\varepsilon_i \pm   \varepsilon_j : m+1 \leq i<j \leq n\right\} \cup \{ \varepsilon_i : m+1 \leq i \leq n \}, && \hspace*{-3.5mm} \text{ for }\, \soodod, \soodev.
	\end{aligned}
\end{cases}
$$
 Observe that the set $P$ can be considered as the set of positive roots of $K$, where $K$ is as follows:
$$
K \cong \begin{cases}
	\begin{aligned}
		& \GL(n-m), && \text{ for } \quad  \glnm, \\
		&  \Sp(2n-2m), && \text{ for } \quad  \spnm, \\
		& \SO(2n-2m+1), && \text{ for } \quad  \soodod, \soodev,  \\
		&  \SO(2n-2m), && \text{ for } \quad  \soevod, \soevev.
	\end{aligned}
\end{cases}
$$
Note that, we can write $R_p = R_{\mathfrak{k}}$ and $\mathrm{e}^{\rho_p} =\mathrm{e}^{\rho_{\mathfrak{k}}}$ as $P=\Phi_{\mathfrak{k}}^{+}$. Therefore, by equation~(\ref{rg prod}),
$R_{\mathfrak{g}}=R_{\mathfrak{k}} \cdot R_{q}$ and $\mathrm{e}^{\rho_{\mathfrak{g}}} = \mathrm{e}^{\rho_{\mathfrak{k}}} \cdot \mathrm{e}^{\rho_q}$. Hence, using equation~(\ref{delg}), the denominator $\Delta_{\mathfrak{g}}$  becomes   
\begin{equation}\label{delta g}
	\Delta_{\mathfrak{g}}= \mathrm{e}^{\rho_q} \cdot R_{q} \cdot \mathrm{e}^{\rho_{\mathfrak{k}}} \cdot R_{\mathfrak{k}}.
\end{equation}

\noindent \textbf{Third Part} In this part,  we use the Weyl character formula for $G$ and break Weyl numerator into the cosets over $W_{\mathfrak{k}} \setminus W_{\mathfrak{g}}$. Further, we use the Weyl dimension formula (Theorem~\ref{new weyl dim}) and get an expression of the multiplicity of $m(\lambar, \mubar)$ as an alternating sum over $W_{\mathfrak{k}} \setminus W_{\mathfrak{g}}$ after finding the coefficient of $\operatorname{e}^{\mubar}$ in certain equation. 

Using the Weyl character formula for $G$ to $\pilam$ and  equation~(\ref{delta g}) we obtain the following:
\begin{equation}\label{end part2 gl}
	\pilam = \frac{1}{  \mathrm{e}^{\rho_q} \cdot R_{q} \cdot \mathrm{e}^{\rho_{\mathfrak{k}}} \cdot R_{\mathfrak{k}} }   \sum_{s \in W_{\mathfrak{g}}} \operatorname{sgn}(s)  \mathrm{e}^{ s \cdot\left(\lambar + \rho_{\mathfrak{g}}\right)}.
\end{equation}
We break down this Weyl numerator into the cosets over $W_{\mathfrak{k}} \setminus W_{\mathfrak{g}}$ as follows:  
\begin{align} \label{sum break all} 
	\sum_{s \in \wg} \operatorname{sgn}(s)  \mathrm{e}^{ s \cdot\left(\lambar + \rho_{\mathfrak{g}}\right)} =  \sum_{s \in \wk 
		\setminus \wg} \operatorname{sgn}(s) \left\{ \sum_{w \in \wk} \operatorname{sgn}(w) \mathrm{e}^{ (ws) \cdot\left(\lambar + \rho_{\mathfrak{g}}\right)}
	\right\}.
\end{align}
Let $\gamma=s \cdot\left(\lambda+\rho_{\mathfrak{g}}\right)$ for $s \in \wg$. We denote \(\gamma=\sum_{i=1}^{n} \gamma_i \varepsilon_i\). Note that, $s \cdot \varepsilon_i = \varepsilon_{s^{-1}(i)}$. So, 
\begin{eqnarray}\label{gamma i}
	\gamma_i=\lambda_{s(i)}+ \rho_{s(i)}.
\end{eqnarray}
As $\overline{\varepsilon_{m+i}}=0$ for $i \geq 1$, we can express $\overline{\gamma}$ as $\overline{\gamma}=\sum_{i=1}^{m} \gamma_i \varepsilon_i$.
Let $\widetilde{\gamma}=\sum_{i=m+1}^{n} \gamma_i \varepsilon_i$. This allows us to express $\gamma = \overline{\gamma} + \widetilde{\gamma}$.
Further, $\omega(\gamma)=\overline{\gamma} + \omega (\widetilde{\gamma})$ for  $w \in \wk$ as $\omega(i)=i$, for $1\leq i \leq m$. Thus, 
\begin{align}\label{pull out}
	\sum_{w \in \wk} \operatorname{sgn}(w) \mathrm{e}^{ (ws) \cdot\left(\lambar + \rho_{\mathfrak{g}}\right)}
	&=   \mathrm{e}^{ \overline{\gamma}} \left\{ \sum_{w \in \wk} \operatorname{sgn}(w) \mathrm{e}^{ w \cdot \widetilde{\gamma} }
	\right\}.
\end{align}

 Observe that,  $\rho_{\mathfrak{k}}=\sum_{i=m+1}^{n} \rho_i \varepsilon_i$ by equation~(\ref{rho}). Let $\tilde{\lambda}=\sum_{i=m+1}^{n} \lambda_i \varepsilon_i $. Note that, $s(i)=i$ for $s \in \wk \setminus \wg$, $m+1 \leq i \leq n$. Hence, by using (\ref{gamma i}), we have $\gamma_i=\lambda_i + \rho_i$. Therefore,
 \begin{equation}\label{gamma tilde}
 	\widetilde{\gamma}= \sum_{i=m+1}^{n} \gamma_i \varepsilon_i=\sum_{i=m+1}^{n} (\lambda_i + \rho_i) \varepsilon_i=\sum_{i=m+1}^{n} \lambda_i \varepsilon_i  + \sum_{i=m+1}^{n} \rho_i \varepsilon_i=\tilde{\lambda} + \rho_{\mathfrak{k}}.
 \end{equation} 
Equation (\ref{end part2 gl}) becomes the following after combining equations (\ref{gamma tilde}), (\ref{pull out}), (\ref{sum break all})
\begin{equation}\label{weyl char k}
	\pilam = \frac{1}{\mathrm{e}^{\rho_{q}} \cdot R_{q}  }     \sum_{s \in \wk 
		\setminus \wg} \operatorname{sgn}(s) \mathrm{e}^{ \overline{\gamma}} \left\{ \frac{\sum_{w \in \wk} \operatorname{sgn}(w) \mathrm{e}^{ w \cdot\left( \tilde{\lambda} + \rho_{k}\right)}}{\mathrm{e}^{\rho_{\mathfrak{k}}} \cdot R_{\mathfrak{k}}} 
	\right\}. 
\end{equation}
The expression within the parentheses in (\ref{weyl char k}) corresponds to the Weyl character formula for $K$. Now when we restrict $ \pilam$ from $\mathfrak{t}_{\mathfrak{g}}$ to $\mathfrak{t}_{\mathfrak{h}}$, the expression inside the parentheses provides $\dim(\chi_{\tilde{\lambda}})$, the Weyl dimension of the representation $\chi_{\tilde{\lambda}}$ of $K$ with the highest weight $\tilde{\lambda}$. Observe that $\overline{Q}=\overline{\Phi_{\mathfrak{g}}^{+}}$, implies $\mathrm{e}^{\overline{\rho_{q}}}=\mathrm{e}^{ \overline{\rho_{\mathfrak{g}}}}$. Further 
$\overline{\Phi_{\mathfrak{g}}^{+}}=\Phi_{\mathfrak{h}}^{+} \cup \Sigma$ implies  $\overline{R_q}=R_{\mathfrak{h}} \cdot R_{\Sigma}$.
Hence taking the restriction of the equation~(\ref{weyl char k}) we get,
\begin{eqnarray*}
	\pilam|_H =\frac{1}{\mathrm{e}^{ \overline{\rho_{\mathfrak{g}}}} \cdot R_{\mathfrak{h}} \cdot R_{\Sigma}}     \sum_{s \in \wk 
			\setminus \wg} \operatorname{sgn}(s) \mathrm{e}^{ \overline{\gamma}} \dim(\chi_{\tilde{\lambda}}).
\end{eqnarray*}
 Using equation~(\ref{def partition}), the last equation can be written as:
\begin{align}\label{part3 end all}
	R_{\mathfrak{h}} \cdot \pilam|_H 
	= \sum_{s \in \wk 
		\setminus \wg} \sum_{\xi \in \mathfrak{t}_{\mathfrak{h}}^*} \operatorname{sgn}(s) \dim(\chi_{\tilde{\lambda}}) \wp_{\Sigma}(\xi)  \mathrm{e}^{\overline{\gamma}-\xi-\overline{\rho_{\mathfrak{g}}}}. 
\end{align}
Equate the coefficient of $\mathrm{e}^{\mubar}$ in (\ref{part3 end all}) with that in (\ref{weyl char psi gl}) to obtain:
\begin{equation}\label{end part3}
	m(\lambar, \mubar)	= \sum_{s \in \wk 
		\setminus \wg}  \operatorname{sgn}(s) \dim(\chi_{\tilde{\lambda}}) \wp_{\Sigma}(\overline{\gamma}-\mubar-\overline{\rho_{\mathfrak{g}}}).
\end{equation}

\vspace{1mm}

\noindent \textbf{Fourth Part} In this part, we write the summation in equation~(\ref{end part3})  over $\mathfrak{S}_{n-m} \setminus \Sn$. Then we use Weyl dimension formula for $K$ and partition value to get the final formula.

 We can identify $\wk 
\setminus \wg$ as follows:
\begin{equation*}
	\begin{aligned}
		& \ \  \mathfrak{S}_{n-m} \setminus \mathfrak{S}_n,  &&  \text{ for }  \glnm, \\
	&	\left((\Z/{2\Z})^{n-m} \rtimes \mathfrak{S}_{n-m}\right) \setminus \left(  (\Z/{2\Z})^{n} \rtimes \mathfrak{S}_n \right), &&   \text{ for }  \spnm, \soodod, \soodev, \\
	&	\left((\Z/{2\Z})^{n-m-1} \rtimes \mathfrak{S}_{n-m}\right) \setminus \left(  (\Z/{2\Z})^{n-1} \rtimes \mathfrak{S}_n \right), &&   \text{ for }   \soevod, \soevev.
	\end{aligned}
\end{equation*}
A set of representatives of $ \wk 
\setminus \wg$ can be taken to be a pair of representatives of $\mathfrak{S}_{n-m} \setminus \mathfrak{S}_n $ and $(\Z/{2\Z})^{n-m} \setminus (\Z/{2\Z})^{n}$.
We already have the sum in (\ref{end part3}) over $\mathfrak{S}_{n-m} \setminus \mathfrak{S}_n $ for $\glnm$. We need to do for other pairs.
Given $s \in \wk 
\setminus \wg$, we can write $s=\sigma \cdot \Bar{v}$, where $\sigma \in (\Z/{2\Z})^{m}$ and $\Bar{v} \in \mathfrak{S}_{n-m} \setminus \mathfrak{S}_n$.
Choose any representative $v \in \Sn$ for $\Bar{v}$. Thus, 
$$\overline{v \cdot\left(\lambda+\rho_{\mathfrak{g}}\right)}=\sum_{j=1}^m (\lambda_{v(j)}+\rho_{v(j)}) \varepsilon_j.$$
Note that each coordinate of $\overline{v \cdot\left(\lambda+\rho_{\mathfrak{g}}\right)}$ is positive. Further, if   
$\overline{\rho_{\mathfrak{g}}}=(\rho_1, \rho_2, \ldots, \rho_m)$, 
 each coordinate of $\overline{\rho_{\mathfrak{g}}}$ is also positive. As every coordinate of $\mubar$ is non-negative, and
consequently, if $\sigma \in (\Z/{2\Z})^{m}$ and $\sigma \neq 1$, it implies that at least one component of $\overline{(\sigma v) \cdot\left(\lambda+\rho_{\mathfrak{g}}\right)}-\mu-\overline{\rho_{\mathfrak{g}}}$ is negative. Hence by Lemma \ref{lemma partition},
$$\wp_{\Sigma}\left(\overline{(\sigma v) \cdot\left(\lambda+\rho_{\mathfrak{g}}\right)}-\mu-\overline{\rho_{\mathfrak{g}}}\right)=0 \quad \text{ for } \sigma \neq 1, \Bar{v} \in (\mathfrak{S}_{n-m} \setminus \Sn).$$ Thus, we may take $\sigma=1$, and we can express the summation over $\mathfrak{S}_{n-m} \setminus \Sn$. Therefore, equation~(\ref{end part3}) becomes:
\begin{equation}\label{coset over sn}
	m(\lambar, \mubar)	=\sum_{s \in \mathfrak{S}_{n-m} \setminus \Sn} \operatorname{sgn}(s) \dim(\chi_{\tilde{\lambda}}) \wp_{\Sigma}(\overline{\gamma}-\mubar-\overline{\rho_{\mathfrak{g}}}).
\end{equation}
Using the Weyl dimension formula (Theorem~\ref{new weyl dim}), equation~(\ref{coset over sn}) becomes:
\begin{equation}\label{sum  before final}
	m(\lambar, \mubar)	= \sum_{s \in \mathfrak{S}_{n-m} \setminus \Sn}  \operatorname{sgn}(s) \left(\sum_{\sigma \in \mathfrak{S}_{n-m}} \operatorname{sgn}(\sigma) \prod_{j=m+1}^{n} g_{\sigma(j),j} \right)
	 \wp_{\Sigma}(\overline{\gamma}-\mubar-\overline{\rho_{\mathfrak{g}}}). 
\end{equation}
As $s(j)=j$ for $m+1\leq j \leq n$, we have $g_{\sigma s(j),j}=g_{\sigma(j),j}$ for $m+1\leq j \leq n$. Hence equation~(\ref{sum  before final}) becomes:
\begin{align*}
	m(\lambar, \mubar)	
	&= \sum_{s \in \mathfrak{S}_{n-m} \setminus \Sn}  \operatorname{sgn}(s) \sum_{\sigma \in \mathfrak{S}_{n-m}} \operatorname{sgn}(\sigma) \prod_{j=m+1}^{n} g_{\sigma s(j),j} \hspace{2mm}
	\wp_{\Sigma}(\overline{\gamma}-\mubar-\overline{\rho_{\mathfrak{g}}}),\\
	&= \sum_{s \in \Sn}  \operatorname{sgn}(s)  \prod_{j=m+1}^{n} g_{ s(j),j} \hspace{2mm}
	\wp_{\Sigma}(\overline{\gamma}-\mubar-\overline{\rho_{\mathfrak{g}}}), \\
	&= \sum_{s \in \Sn}  \operatorname{sgn}(s) \left(\prod_{j=1}^{m} f_{ s(j),j}\right) \hspace{1mm} \prod_{j=m+1}^{n} g_{ s(j),j}.    
\end{align*}
We have used Lemma~\ref{lemma partition} in the last equality. Since 
$$\overline{\gamma}-\mubar-\overline{\rho_{\mathfrak{g}}}=\sum_{j=1}^{m}\left(\lambda_{s(j)}-\mu_j +j -s(j)\right)\varepsilon_j = \sum_{j=1}^{m}u_{s(j),j} \varepsilon_j ,$$
we have $f_{ s(j),j}=\fracnom{u_{s(j),j} +r-1}{r-1}$. Therefore,
 we get the desired matrix as in the statement of the Theorems~\ref{general gl}-\ref{general ortho}. Hence,
this gives us the corresponding multiplicity $m(\lambar, \mubar)$ formulae of Theorems~\ref{general gl}-\ref{general ortho}.

\noindent \textbf{Fifth Part}
Now we find the corresponding interlacing condition for three pair and we prove this by induction. We only give details for  the general linear groups; for other pairs follow similarly.

We prove the multiplicity
$m(\lambar, \mubar)$ is nonzero if and only if 
\begin{equation}\label{interlace gl}
	\lambda_i \geq \mu_i \geq \lambda_{i+n-m} \quad \text{ for } \quad 1 \leq i \leq m,
\end{equation}
where 
	\begin{equation*}
		M_{ij} = \begin{cases}
			\displaystyle\fracnom{u_{ij} + n-m-1}{n-m-1}, & \text{ if } \quad \hspace*{8.5mm} 1 \leq j \leq m, \vspace{2mm}\\
			\displaystyle\binom{u_{ij} + n-j}{n-j}, & \text{ if } \quad  m+1 \leq j \leq n.
		\end{cases} 
	\end{equation*}
This part is a generalization of the induction idea as in \cite[Lemma~8.3.3]{GW}.

We shall prove equation~(\ref{interlace gl}) by induction on $n$ for a fixed $m$, where $n>m$.
We assume the statement is true for $n-1$ and prove for $n$. We have different cases depending upon the position of $\mu_1$ relative to $(\lambda_1, \lambda_2, \ldots, \lambda_{n-m+1})$. 
We need to show the multiplicity $m(\lambar, \mubar)$ is zero when $\mu_1 >\lambda_1$ and $\mu_1 < \lambda_{n-m+1}$, and that $m(\lambar, \mubar)$ is non-zero when $\lambda_i \geq \mu_1 \geq \lambda_{i+1}$ for $1\leq i \leq n-m$. 

 When $\mu_1 > \lambda_1$, the first column is entirely zero, leading to a zero determinant and consequently,  $m(\lambar, \mubar)$ is zero.  When $\mu_1 < \lambda_{n-m+1}$, the first $n-m+1$ rows are linearly dependent, hence multiplicity is zero in this case also. 

The proof is similar in all cases $\lambda_i \geq \mu_1 \geq \lambda_{i+1}$ for $1\leq i \leq n-m$. We give arguments only for $\lambda_1 \geq \mu_1 \geq \lambda_{2}$ case. In this case, the multiplicity $m(\lambar, \mubar)$ becomes $\fracnom{u_{11} + n-m-1}{n-m-1}$ times the determinant of $(n-1)\times(n-1)$ matrix, getting after deleting the first row and the first column. By induction hypothesis, the determinant of this $(n-1)\times(n-1)$ matrix is non-zero. Further,  $\fracnom{u_{11} + n-m-1}{n-m-1}$ is also non-zero as $\lambda_1 \geq \mu_1$. Hence $m(\lambar, \mubar)$ is non-zero in the case  $\lambda_1 \geq \mu_1 \geq \lambda_{2}$. This concludes the proof of interlacing condition for general linear groups.

This completes the proof of Theorems \ref{general gl}-\ref{general ortho}.

\section{Comparison of Multiplicities}

We follow the same notation as in Section~\ref{sec partition}. Define $\ell(\lambar)$, the length of $\lambar$, to be the largest integer $s$ such that $\lambda_s\neq 0$. In this section, we consider the branching for the following pairs (For consistency of the four pairs, note that $2m-n=n-2(n-m)$):

$$
\begin{array}{cc}
	\begin{aligned}
		{\GL}^{n}_{2m-n}&=\left( \right. \GL(2m-n) \subset \GL(n) \left. \right),\\
		{\Sp}^{2n}_{2m} &=\left( \right. \Sp(2m) \subset \Sp(2n) \left. \right),
	\end{aligned} & \begin{aligned}
	{\SO}^{2n+1}_{2m+1} &= \left( \right. \SO(2m+1) \subset \SO(2n+1) \left. \right),\\
	{\SO}^{2n}_{2m}&=\left( \right. \SO(2m) \subset \SO(2n) \left. \right).
	\end{aligned}
\end{array}
$$
 The corollary derived from Theorems \ref{general gl} - \ref{general ortho} is as follows.
\begin{corollary}\label{cor compare}
	\begin{enumerate}
		\item Let $\frac{n}{2} \leq m < n$. For fixed pair ($\lambar$, $\mubar$) with $\ell(\mubar) \leq 2m-n$, the branching multiplicity $m(\lambar, \mubar)$ is independent of the pairs considered.
		\item Let $\frac{n}{2} \leq m < n$. For fixed pair ($\lambar$, $\mubar$) with $\ell(\lambar) \leq 2m-n$, the branching multiplicity $m(\lambar, \mubar)$ is independent of the pairs considered.
		\item Let $0 \leq m < n$. For fixed pair ($\lambar$, $\mubar$) with $\ell(\lambar) \leq m$, the branching multiplicity $m(\lambar, \mubar)$ is independent of the pairs $\Sp^{2n}_{2m}$, $\SO^{2n+1}_{2m+1}$, $\SO^{2n}_{2m}$.
		\end{enumerate}
\end{corollary}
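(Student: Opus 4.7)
The plan is to compare the determinantal formulas of Theorems~\ref{general gl}--\ref{general ortho} across the four pairs, exploiting a common block structure that the length hypotheses force on the matrices. All three parts rest on the same observation: the first columns of the $n\times n$ matrix, namely those with entries $\fracnom{u_{ij}+2n-2m-1}{2n-2m-1}$, are identical across all four pairs, so it is only the right half of each matrix that can distinguish them.

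I would begin with Part~(3), which is the cleanest. Under $\ell(\lambar)\le m$, any row $i>m$ has $\lambda_i=0$, and for $j\le m$ the value $u_{ij}=-\mu_j+j-i\le-1$ forces $\fracnom{u_{ij}+2n-2m-1}{2n-2m-1}=0$. The matrix thus becomes block-upper-triangular, $\begin{pmatrix}A&B\\0&C\end{pmatrix}$, with $A$ the $m\times m$ top-left block common to the three pairs. After substituting $\lambda_i=\mu_j=0$ in the bottom-right, the block $C$ coincides with the matrix appearing in the Weyl dimension formula (Theorem~\ref{new weyl dim}) for the rank-$k:=n-m$ group of the same Cartan type as the ambient, evaluated at the zero weight. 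Since $\dim V_{0}=1$ in each classical group, this reads off $\det C^{\Sp}=1$, $\det C^{\SO(2n+1)}=2^{-k}$, $\det C^{\SO(2n)}=2^{-(k-1)}$, and multiplying by the respective prefactors $1,\,2^{k},\,2^{k-1}$ from Theorems~\ref{general sp} and~\ref{general ortho} returns $\det A$ in all three cases.

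Part~(2) proceeds by the same vanishing trick with $2m-n$ in place of $m$: for $i>2m-n$ and $j\le 2m-n$ the entries vanish, producing a block-triangular matrix with $A$ of size $(2m-n)\times(2m-n)$ and $C$ of size $2(n-m)\times 2(n-m)$. Under both $\ell(\lambar)\le 2m-n$ (given) and $\ell(\mubar)\le 2m-n$ (implicit, since the GL subgroup has rank $2m-n$), the block $C$ depends on neither $\lambar$ nor $\mubar$ and is therefore a universal constant matrix depending only on $n,m,G$. Specializing to $\lambar=\mubar=0$, where every multiplicity equals $\dim V_{0}=1$, pins down $2^{l^{G}}\det C^{G}=1$ for each pair, so the multiplicity equals $\det A$ in all four pairs. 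For Part~(1) the bottom-left block need not vanish, but the hypothesis $\ell(\mubar)\le 2m-n$ makes the rightmost $2(n-m)$ columns $\mubar$-independent and polynomial in $y_{i}=\lambda_{i}-i$ of degree at most $2(n-m)-1$. They therefore lie in the $2(n-m)$-dimensional span of the GL columns $\bigl[\binom{y_{i}+n}{k}\bigr]_{i}$, $k=0,\dots,2(n-m)-1$, giving a factorization $\det[M^{G}]=\det T^{G}\cdot\det[M^{GL}]$ with a constant transition matrix $T^{G}$. Evaluating at $\lambar=0$ (which falls in the Part~(2) setting) pins down $\det T^{G}=2^{-l^{G}}$, and multiplying through by $2^{l^{G}}$ concludes the equality of multiplicities.

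The main obstacle will be the polynomial identification in Parts~(1) and~(2): since the entries of the matrix are $\fracnom{\cdot}{\cdot}$ rather than $\binom{\cdot}{\cdot}$, one must verify that in the ranges of indices relevant to each block the two symbols agree, so that the polynomial-span and Weyl-dimension arguments are strictly valid. A direct case analysis using $\lambda_{i}\ge 0$ together with the bounds on $i$ and $j$ will settle this; once it is in hand, the rest of the proof reduces to formal manipulations of the determinants and a bookkeeping of the $2^{l}$ prefactor.
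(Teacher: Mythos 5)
Your proposal is correct, and it is essentially the derivation the paper intends but does not write out: Corollary~\ref{cor compare} is stated with only the remark that it is ``derived from Theorems~\ref{general gl}--\ref{general ortho}'', so your block-triangular factorization for parts (2)--(3) (with the bottom-right block identified with the determinant in Theorem~\ref{new weyl dim} at the zero weight, cancelling the $2^{l}$ prefactors) and the constant column-transition matrix for part (1), normalized by evaluating at the trivial representation, supply exactly the missing details. The one obstacle you flag does resolve: the only entries written with $\fracnom{\cdot}{\cdot}$ that enter your polynomial-span argument are those in columns $2m-n<j\leq m$ of the symplectic and orthogonal matrices, where $\mu_j=0$ gives $u_{ij}+2n-2m-1=\lambda_i+j-i+2n-2m-1\geq \lambda_i+n-i\geq 0$, so $\fracnom{\cdot}{\cdot}$ and $\binom{\cdot}{\cdot}$ agree there, while the columns in which the two symbols could genuinely differ ($j\leq 2m-n$, resp.\ $j\leq m$ in part (3)) are literally identical across the pairs being compared and need no polynomial interpretation.
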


\begin{center}
	\begingroup
	\setlength{\tabcolsep}{8pt} 
	\renewcommand{\arraystretch}{1.75} 
		\captionof{table}{Calculation of $m(\lambar, \mubar)$ for $\lambar=(3,1,0,0)$.}
	\begin{tabular}{|c||c|c|c|c|c|c|c|}
		\hline
		\backslashbox{$(H,G)$}{$\mubar$} &  \setlength{\baselineskip}{15pt} $(0,0)$ & $(1,0)$ &  $(1,1)$ & $(2,0)$ & $(2,1)$ & $(3,0)$ & $(3,1)$ \\
		\hline \hline
		$\Sp^8_4$ & 45 & 40 & 10 & 16& 4&4 &1 \\ \hline
		$\SO^9_5$ & 45 & 40 & 10 & 16& 4&4 &1 \\ \hline
		$\SO^8_4$ & 45 & 40 & 10 & 16& 4&4 &1 \\ \hline  	\end{tabular}
		\begin{tikzpicture}[overlay]
			\draw[black] (-9.78,-.45) ellipse (.55cm and 1.24cm);
			\draw[black] (-8.27,-.45) ellipse (.55cm and 1.24cm);
			\draw[black] (-6.77,-.45) ellipse (.55cm and 1.24cm);
			\draw[black] (-5.29,-.45) ellipse (.55cm and 1.24cm);
			\draw[black] (-3.82,-.45) ellipse (.55cm and 1.24cm);
			\draw[black] (-2.32,-.45) ellipse (.55cm and 1.24cm);
			\draw[black] (-.82,-.45) ellipse (.55cm and 1.24cm);
		\end{tikzpicture}
	\label{tab cor branch}
	\endgroup
	\vspace{3mm}
\end{center}

 Part (1) of the Corollary~\ref{cor compare} determines multiplicities that are always independent across all pairs for a given value of $\lambar$.  
In part (2), the corollary gives a criteria for $\lambar$ that leads to the independence of all multiplicities among pairs. One can find similar results in \cite[Theorem~2]{MW}, proving branching multiplicity is independent of the pairs ${\GL}^{n+1}_{n-1} $ and  ${\Sp}^{2n}_{2n-2}$. As a corollary, Miller
proves that multiplicities are independent of the pairs ${\GL}^{n}_{n-2} $ and  ${\Sp}^{2n}_{2n-2}$, when $\ell(\mubar) \leq n-2$. Table~\ref{tab cor branch} verifies part (3) of Corollary~\ref{cor compare}.

We can describe the multiplicities as a product formula, when $m=n-1$ for the mentioned pairs at the start of this section. 

\begin{corollary}\label{cor prod}
	  The multiplicity 
	$m(\lambar, \mubar)$ is nonzero if and only if 
	\begin{equation*}\label{interlacing formula}
		\lambda_j \geq \mu_j \geq \lambda_{j+2} \quad \text { for } j=1, \ldots, n-1
	\end{equation*}
When these inequalities are satisfied, let
	$$
	x_1 \geq y_1 \geq x_2 \geq y_2 \geq \cdots \geq x_{n-1} \geq y_{n-1} \geq x_{n}
	$$
	be the non-increasing rearrangement of $\left\{\lambda_1, \ldots, \lambda_n, \mu_1, \ldots, \mu_{n-1}\right\}$.
 Then, 
	$$ m(\lambar, \mubar)=  \begin{cases}
		\begin{array}{lc}
			\left[\prod_{j=1}^{n-1}(x_j-y_j+1)\right], &  \text{ for } \quad {\GL}^{n}_{n-2}, \vspace{.2cm} \\ 
			\left[\prod_{j=1}^{n-1}(x_j-y_j+1)\right] (x_n+1),&  \text{ for } \quad {\Sp}^{2n}_{2n-2}, \vspace{.2cm} \\ 
			\left[\prod_{j=1}^{n-1}(x_j-y_j+1)\right](2x_n+1), & \text{ for } \quad {\SO}^{2n+1}_{2n-1}, \vspace{.2cm} \\ 
			\left[\prod_{j=1}^{n-1}(x_j-y_j+1)\right], & \text{ for } \quad {\SO}^{2n}_{2n-2}.
		\end{array}
	\end{cases}
	$$
\end{corollary}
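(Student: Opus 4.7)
The corollary specializes Theorems~\ref{general sp} and~\ref{general ortho} at $m = n - 1$, and Theorem~\ref{general gl} at $m = n - 2$. In each of the four pairs the parameter $r$ governing the first $m$ columns equals $2$, so $\fracnom{u_{ij}+1}{1}$ simplifies to the piecewise-linear $(u_{ij} + 1)_{+}$, and the last column is linear or constant in $\lambda_i$. Part~(i) is immediate from the general statements. For part~(ii), I first record the shape of $M$ under the interlacing: $u_{ij} \leq -2$ whenever $i \geq j + 2$ and $j \leq m$, so the first $m$ columns vanish below the subdiagonal. The subdiagonal entry at $(j + 1, j)$ is either $0$ (Case~A at position $j$, when $\mu_j \geq \lambda_{j+1}$) or $\lambda_{j+1} - \mu_j$ (Case~B), and these two alternatives are exactly the two possible identifications $y_j = \mu_j$ versus $y_j = \lambda_{j+1}$ in the sorted merge $x_1 \geq y_1 \geq \cdots \geq x_n$.

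The main step is induction on $n$, splitting on the first subdiagonal. In Case~A at $j = 1$ one has $M_{21} = 0$, so cofactor expansion along the first column gives $\det M = (\lambda_1 - \mu_1 + 1) \det(M_{\widehat{1}, \widehat{1}})$, where the minor is exactly the matrix for the one-smaller branching problem on $(\lambar', \mubar') = \bigl((\lambda_2, \ldots, \lambda_n), (\mu_2, \ldots, \mu_{n-1})\bigr)$. In Case~B the key observation is that the row operation $R_1 \to R_1 - R_2$ produces a uniform first row: a direct column-by-column check gives $M_{1j} - M_{2j} = \lambda_1 - \lambda_2 + 1$ for every $j \leq n - 1$, while the last column yields the same value for $\Sp^{2n}_{2n-2}$ and $\SO^{2n+1}_{2n-1}$ and yields $0$ for $\GL^n_{n-2}$ and $\SO^{2n}_{2n-2}$. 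In both sub-cases, pulling out the common factor $(\lambda_1 - \lambda_2 + 1)$ (from the full row when the last entry is also $\lambda_1 - \lambda_2 + 1$, and from the truncated row after a cofactor expansion along the last column when that entry is $0$) and then subtracting column $1$ from the other columns reduces the determinant, by an entry-by-entry check, to $(\lambda_1 - \lambda_2 + 1)$ times the matrix for the smaller branching problem on $(\tilde\lambar, \tilde\mubar) = \bigl((\mu_1, \lambda_3, \ldots, \lambda_n), (\mu_2, \ldots, \mu_{n-1})\bigr)$.

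In both cases the first factor I extract equals $x_1 - y_1 + 1$ --- either $\lambda_1 - \mu_1 + 1$ with $y_1 = \mu_1$, or $\lambda_1 - \lambda_2 + 1$ with $y_1 = \lambda_2$ --- and the sorted merge of the smaller data is exactly the tail of the original merge from position~$2$ onward. The inductive hypothesis then completes $\prod_{j=2}^{n-1}(x_j - y_j + 1)$ together with the group-dependent trailing factor $\lambda_n + 1$ for $\Sp^{2n}_{2n-2}$, $2\lambda_n + 1$ for $\SO^{2n+1}_{2n-1}$, and $1$ for $\GL^n_{n-2}$ and $\SO^{2n}_{2n-2}$. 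This trailing factor arises from the $(n, n)$-entry of $M$ together with the $2^{\ell}$ prefactor of Theorem~\ref{general ortho} in the $\SO$ cases, and it is carried through the induction unchanged; the base case $n = 1$ is trivial.

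The main obstacle is the entry-by-entry identification in Case~B: one must verify that the $(n-1) \times (n-1)$ matrix obtained after the row-and-column operation really is the matrix of Theorems~\ref{general gl}--\ref{general ortho} attached to $(\tilde\lambar, \tilde\mubar)$, with first parameter $\mu_1$ taking the place of $\lambda_1$. Rows $i \geq 3$ of $M$ are untouched and match the corresponding rows of the smaller matrix by a trivial index shift; row~$2$, after subtraction of column~$1$, becomes exactly the first row of the smaller matrix --- e.g.\ for $\Sp^{2n}_{2n-2}$ one checks $M_{2j} - M_{21} = \mu_1 - \mu_j + j - 1$ for $j \leq n-1$ and $M_{2n} - M_{21} = \mu_1 + n - 1$, matching $\tilde{M}_{1, j-1}$ and $\tilde{M}_{1, n-1}$. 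The $\GL$ and $\SO$ cases follow by the same computation with the appropriately shifted $u$-values in the last column.
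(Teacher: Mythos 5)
Your argument is correct, but it is worth noting that the paper does not actually prove this corollary: it declares the product formulae a ``direct consequence'' of Theorems~\ref{general gl}--\ref{general ortho} (citing \cite[Theorem 8.1.5]{GW} for the known $\Sp^{2n}_{2n-2}$ case) and omits the verification entirely. What you have supplied is precisely that missing verification, by an induction on $n$ that evaluates the determinant in closed form. Your key structural observations all check out: $r=2$ for all four pairs, so the first $m$ columns are $\fracnom{u_{ij}+1}{1}$ and vanish for $i\geq j+2$ since $u_{ij}\leq -2$ there; the subdiagonal entry $\fracnom{\lambda_{j+1}-\mu_j}{1}$ is $0$ or $\lambda_{j+1}-\mu_j$ according as $\mu_j\geq\lambda_{j+1}$ or not, matching the two identifications of $y_j$ in the sorted merge; and in Case~B the differences $M_{1j}-M_{2j}$ equal $\lambda_1-\lambda_2+1$ in every column except the constant last column of $\GL^n_{n-2}$ and $\SO^{2n}_{2n-2}$, after which the minor is literally the matrix of the smaller problem on $\bigl((\mu_1,\lambda_3,\dots,\lambda_n),(\mu_2,\dots,\mu_{n-1})\bigr)$ (I verified the entry identifications and the $2\times 2$ base cases for $\Sp$ and $\SO(5)\supset\SO(3)$). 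The one spot you should tighten is the parenthetical in Case~B about ``a cofactor expansion along the last column'': as written this does not produce a single term, and it is also unnecessary --- since $0=(\lambda_1-\lambda_2+1)\cdot 0$, the common factor can always be pulled out of the entire modified first row, leaving $(1,\dots,1,0)$; one then subtracts $C_1$ only from those columns whose first-row entry is $1$ (in particular \emph{not} from the constant last column), obtaining $(1,0,\dots,0)$ and a clean cofactor expansion whose minor is the smaller matrix. You should also note that the interlacing hypotheses are inherited by the reduced data in both cases (e.g.\ $\mu_1\geq\lambda_3$ guarantees $\tilde{\lambar}$ is dominant in Case~B), and that the base case for $\GL^n_{n-2}$ is $n=2$ (branching to the trivial group, where the determinant reduces to the Weyl dimension $\lambda_1-\lambda_2+1$) rather than $n=1$. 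With those small repairs the proof is complete and fills a genuine gap in the paper's exposition.
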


In Corollary~\ref{cor prod}, $m(\lambar, \mubar)$ is known in this form for the pair ${\Sp}^{2n}_{2n-2}$ in \cite[Theorem 8.1.5]{GW}. Such a corollary for all pairs considered above seems new. It is a direct consequence of Theorem~\ref{general gl} (in case of ${\GL}^{n}_{n-2}$) and Theorem~\ref{general ortho} (in case of ${\SO}^{2n+1}_{2n-1}$ and of ${\SO}^{2n}_{2n-2}$), so we omit the proof.

\begin{remark}
	The multiplicity for the restriction problem from $\Sp(2n)$ to $\Sp(2n-2)$ need not be 1, and is a bit complicated . The content of the above corollary in this case is that if $\mubar = (\mu_1 \geq \mu_2 \geq \cdots \geq \mu_{n-1} \geq 0)$, and $\lambar = (\lambda_1 \geq \lambda_2 \geq \cdots \geq \lambda_n \geq 0)$ are highest weight for  $\Sp(2n-2)$ and $\Sp(2n)$, then if $\mu_{n-1}=0$, forcing $x_n=0$ in the notation of Corollary~\ref{cor prod} and therefore 
	$$m_{\Sp(2n-2)}(\lambar, \mubar)=m_{\GL(n-2)}(\lambar, \mubar),$$
	 where now $\lambar$ is considered as a highest weight of $\gln$ and $\mubar$ as a highest weight of $\GL(n-2)$ as $\mu_{n-1}=0$. More generally, we have the following equalities: 
	 $$m_{\SO(2n-2)}(\lambar, \mubar)=m_{\Sp(2n-2)}(\lambar, \mubar)=m_{\SO(2n-1)}(\lambar, \mubar)=m_{\GL(n-2)}(\lambar, \mubar),$$
	  where  $\mubar = (\mu_1 \geq \mu_2 \geq \cdots \geq \mu_{n-1} \geq 0)$ with $\mu_{n-1}=0$.
\end{remark}
  
  In Corollary~\ref{cor prod}, observe that $x_n$ can be either $\lambda_n$ or $\mu_{n-1}$ depending on the specific inequalities. For the pair ${\GL}^{n}_{n-2}$, note that $x_n$ must be zero since $\mu_{n-1}=0$. 
   Further, note that the multiplicity formula $m(\lambar, \mubar)$ for the pair ${\SO}^{2n}_{2n-2}$ does not depend on $x_n$.

\vspace{3mm}
\noindent{\bf Acknowledgement:}
This project is part of the author's thesis completed at IIT Bombay, and the author expresses gratitude for the PhD fellowship provided by the institute.

The author sincerely thanks Prof. Dipendra Prasad, the Ph.D. supervisor, for continuous guidance, support, and insightful discussions. Prof. Prasad's kindness and invaluable feedback on the research work are also appreciated. Special thanks for dedicating substantial time to review the paper and rectify errors.


\begin{thebibliography}{\v{Z}73}
	\expandafter\ifx\csname url\endcsname\relax
	\def\url#1{\texttt{#1}}\fi
	\expandafter\ifx\csname doi\endcsname\relax
	\def\doi#1{\burlalt{doi:#1}{http://dx.doi.org/#1}}\fi
	\expandafter\ifx\csname urlprefix\endcsname\relax\def\urlprefix{URL }\fi
	\expandafter\ifx\csname href\endcsname\relax
	\def\href#1#2{#2}\fi
	\expandafter\ifx\csname burlalt\endcsname\relax
	\def\burlalt#1#2{\href{#2}{#1}}\fi
	
	\bibitem[FH91]{FH}
	William Fulton and Joe Harris.
	\newblock {\em Representation theory}, volume 129 of {\em Graduate Texts in
		Mathematics}.
	\newblock Springer-Verlag, New York, 1991.
	\newblock \doi{10.1007/978-1-4612-0979-9}.
	\newblock A first course, Readings in Mathematics.
	
	\bibitem[GW09]{GW}
	Roe Goodman and Nolan~R. Wallach.
	\newblock {\em Symmetry, representations, and invariants}, volume 255 of {\em
		Graduate Texts in Mathematics}.
	\newblock Springer, Dordrecht, 2009.
	\newblock \doi{10.1007/978-0-387-79852-3}.
	
	\bibitem[Heg67]{HGC}
	G.~C. Hegerfeldt.
	\newblock Branching theorem for the symplectic groups.
	\newblock {\em J. Mathematical Phys.}, 8:1195--1196, 1967.
	\newblock \doi{10.1063/1.1705335}.
	
	\bibitem[Mil66]{MW}
	Willard Miller, Jr.
	\newblock A branching law for the symplectic groups.
	\newblock {\em Pacific J. Math.}, 16:341--346, 1966.
	\newblock \urlprefix\url{http://projecteuclid.org/euclid.pjm/1102994979}.
	
	\bibitem[Mur63]{mur}
	Francis~D. Murnaghan.
	\newblock {\em The theory of group representations}.
	\newblock Dover Publications, Inc., New York, 1963.
	
	\bibitem[Oka89]{okada89}
	Soichi Okada.
	\newblock Lattice path method and characters of classical groups, preprint.
	\newblock 1989.
	
	\bibitem[OS19]{scrim}
	Se-jin Oh and Travis Scrimshaw.
	\newblock Identities from representation theory.
	\newblock {\em Discrete Math.}, 342(9):2493--2541, 2019.
	\newblock \doi{10.1016/j.disc.2019.05.020}.
	
	\bibitem[saga]{sagebran}
	The sage developers, branching rules.
	\newblock
	\urlprefix\url{https://doc.sagemath.org/html/en/reference/combinat/sage/combinat/root_system/branching_rules.html}.
	
	\bibitem[sagb]{sage}
	The sage developers, sagemath, the sage mathematics software system (version
	8.1), 2017.
	\newblock \urlprefix\url{https://www.sagemath.org/}.
	
	\bibitem[\v{Z}73]{ZDP}
	D.~P. \v{Z}elobenko.
	\newblock {\em Compact {L}ie groups and their representations}, volume Vol. 40
	of {\em Translations of Mathematical Monographs}.
	\newblock American Mathematical Society, Providence, RI, 1973.
	\newblock Translated from the Russian by Israel Program for Scientific
	Translations.
	
	\bibitem[Wey50]{HWqm}
	Hermann Weyl.
	\newblock {\em The theory of groups and quantum mechanics}.
	\newblock Dover Publications, Inc., New York, german edition, 1950.
	\newblock Reprint of the 1931 English translation.
	
	\bibitem[Whi65]{WML}
	M.~L. Whippman.
	\newblock Branching rules for simple {L}ie groups.
	\newblock {\em J. Mathematical Phys.}, 6:1534--1539, 1965.
	\newblock \doi{10.1063/1.1704691}.
	
\end{thebibliography}

\end{document}